\def\smallskip{\vskip\smallskipamount}
\def\medskip{\vskip\medskipamount}
\def\bigskip{\vskip\bigskipamount}
\newtheoremstyle{thmstyle}{}{}{\itshape}{}{\bfseries}{ }{5pt}{}
\newtheoremstyle{exstyle}{}{}{}{}{\bfseries}{ }{5pt}{}
\newtheoremstyle{defstyle}{}{}{}{}{\bfseries}{ }{5pt}{}
\newtheoremstyle{remstyle}{}{}{}{}{\bfseries}{ }{5pt}{}
\theoremstyle{thmstyle}
\newtheorem{thm}{Theorem}[section]
\newtheorem{theorem}[thm]{Theorem}
\newtheorem{lemma}[thm]{Lemma}
\newtheorem{proposition}[thm]{Proposition}
\newtheorem{corollary}[thm]{Corollary}
\theoremstyle{exstyle}
\newtheorem{example}[thm]{Example}
\theoremstyle{defstyle}
\newtheorem{definition}[thm]{Definition}
\newtheorem{def-prop}[thm]{Definition-Proposition}
\newtheorem{def-lem}[thm]{Definition-Lemma}
\newtheorem{rem-convention}[thm]{Remark-Convention}
\newtheorem{def-note}[thm]{Definition-Notation}
\theoremstyle{remstyle}
\newtheorem{remark}[thm]{Remark}
\theoremstyle{remstyle}
\newcommand{\ZZ}{\mathbb Z}
\newcommand{\D}{\mathcal{D}}
\newcommand{\gc}{{\color{teal} \circ}}
\newcommand{\rc}{{\color{red} \bullet}}
\newcommand{\La}{\Lambda}
\newcommand{\proj}{\operatorname{proj}}
\newcommand{\mmod}{\operatorname{mod}}
\newcommand{\K}{\mathrm{K}}
\newcommand{\Ext}{\mathrm{Ext}}
\newcommand{\Hom}{\mathrm{Hom}}
\newcommand{\Address}{{% additional braces for segregating \footnotesize
  \bigskip
  \footnotesize

  E.~Gupta, \textsc{Université Paris-Saclay, UVSQ, CNRS, Laboratoire de Mathématiques de Versailles, 78000, Versailles, France.}\par\nopagebreak
  \textit{E-mail address}: \texttt{esha.gupta2@uvsq.fr}
}}
\newcommand{\doublewidetilde}[1]{{%
  \mathpalette\double@widetilde{#1}%
}}
\newcommand{\double@widetilde}[2]{%
  \sbox\z@{$\m@th#1\widetilde{#2}$}%
  \ht\z@=.9\ht\z@
  \widetilde{\box\z@}%
}
\title[A restricted model for derived categories of gentle algebras]{A restricted model for the bounded derived category of gentle algebras}
\author{Esha Gupta}
\date{}
\begin{document}
\begin{abstract}
    We present a restricted model for the bounded derived category of gentle algebras that encodes the indecomposable objects and positive extensions between them. The model is then used to count the number of $d$-term silting objects for linearly oriented $A_n$, recovering the result that they are counted by the Pfaff-Fuss-Catalan numbers. 
\end{abstract}
\maketitle
\tableofcontents{}
\section{Introduction}
In recent years, (graded) gentle algebras have come to gather a lot of attention because of their ubiquity in several areas of mathematics. These include cluster theory, where they appear as cluster-tilted algebras and Jacobian algebras associated to triangulations of marked surfaces \cite{ABCP, L}, as well as geometry, where they appear as endomorphism rings of formal generators of some partially wrapped Fukaya categories \cite{HKK}. A geometric model for the module categories of all gentle algebras was provided in \cite{BC}, where the authors realised them as tiling algebras associated to partial triangulations of unpunctured surfaces with marked points on the boundary. A geometric model for the $m$-cluster category of type $A_{n-1}$ was provided in \cite{BM} using diagonals of a regular polygon. In \cite{OPS}, the authors provided a complete model for the bounded derived category of a gentle algebra, which encoded information such as the indecomposable objects, morphisms, mapping cones, Auslander-Reiten translation for perfect objects, and Auslander–Reiten triangles. A model for studying support $\tau$-tilting modules, or equivalently $2$-term silting complexes, of all locally gentle algebras was provided in \cite{PPP19}. For an acyclic quiver $Q$, it was shown in \cite[Theorem~5.14]{KQ} that the hearts of $\D^b(\mmod kQ)$ lying between the canonical heart $\mathcal{H}_Q=\mmod kQ$ and $\mathcal{H}_Q[d-1]$ are in bijection with basic $d$-term silting complexes as well as $d$-cluster tilting sets.

In this work, we modify the model of \cite{OPS} to give a model for the bounded derived category of a gentle algebra that encodes the indecomposable objects and their positive extensions as (ungraded) arcs on a surface and their intersections (Proposition \ref{posext}). We can then restrict this model to study the truncated homotopy categories $\K^{[-d+1,0]}(\proj\La)$ of $d$-term complexes of projectives for a gentle algebra $\La$. Since the model encodes positive extensions, it is particularly suited for the study of $d$-term silting complexes generalizing the case of $2$-term silting complexes studied in \cite{PPP19} (Corollary \ref{dsilt}, Remark \ref{d=2}). Applying the model to the case of linearly oriented $A_n$ quiver, we recover the result of \cite{STW} that the number of $d$-term silting complexes in $\K^b(\proj kA_n)$ is given by the Pfaff-Fuss-Catalan number $C^d_{n+1}$ (\S~\ref{count}).

\section{Marked surfaces and admissible dissections}
Throughout this work, $k$ will denote an algebraically closed field. We recall here the special definition of marked surfaces as used in \cite{APS}, the admissible dissections of which are in bijection with gentle algebras.  
\begin{definition}\cite[Definition~1.7]{APS}
A \emph{marked surface} is a triple $(S, M, P)$, where
\begin{enumerate}
    \item $S$ is an oriented closed smooth surface with non-empty boundary $\partial S$;
    \item $M = M_{\color{teal}\circ} \cup M_{\color{red}\bullet}$ is a finite set of marked points on $\partial S$. The elements of $M_{\color{teal}\circ}$ and $M_{\color{red}\bullet}$ will be represented by symbols $\color{teal}\circ$ and $\color{red}\bullet$, respectively. They are required to alternate on each connected component of $\partial S$, and each such component is required to contain at least one marked point;
    \item $P = P_{\color{red}\bullet}$ is a finite set of marked points in the interior of $S$, called punctures. The elements of $P_{\color{red}\bullet}$ will also be represented by $\color{red}\bullet$.
\end{enumerate}
\end{definition}
\begin{definition}\cite[Definition~1.8]{APS}
A $\color{teal}\circ$\emph{-arc} (or $\color{red}\bullet$\emph{-arc}) is a smooth map $\gamma$ from~$[0, 1]$ to $S \setminus P$ such that
its endpoints $\gamma(0)$ and $\gamma(1)$ are in $M_{\color{teal}\circ}$ (or in $M_{\color{red}\bullet}$, respectively). The curve $\gamma$ is required not to be contractible to a point in $M_{\color{teal}\circ}$ (or $M_{\color{red}\bullet}$, respectively).    
\end{definition} 
We will usually consider arcs up to homotopy and inverses. Two arcs are said to \emph{intersect} if any choice of homotopic representatives intersect.

\begin{definition}\cite[Definition~1.9]{APS}
A collection of pairwise non-intersecting and pairwise different $\color{teal}\circ$-arcs $\{\gamma_1, \cdots , \gamma_r\}$
on the surface $(S, M, P)$ is called \emph{admissible} if the arcs $\{\gamma_1, \cdots , \gamma_r\}$ do not enclose a subsurface containing no punctures of $P_{\color{red}\bullet}$ and with no boundary segment on its boundary. A maximal admissible collection
of $\color{teal}\circ$-arcs is called an \emph{admissible $\color{teal}\circ$-dissection}.    
\end{definition}
The notion of admissible $\color{red}\bullet$-dissection is defined similarly.

To any admissible $\color{teal}\circ$-dissection, we can associate a dual $\color{red}\bullet$-dissection in the following sense.

\begin{proposition}\cite[Proposition~1.13]{APS}
Let $(S, M, P)$ be a marked surface, and let $\Delta$ be an admissible $\color{teal}\circ$-dissection. There exists a unique admissible $\color{red}\bullet$-dissection~$\Delta^*$ (up to homotopy) such that each arc of $\Delta^*$ intersects exactly one arc of $\Delta$.    
\end{proposition}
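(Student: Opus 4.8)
The plan is to read off $\Delta^*$ directly from the decomposition of $S$ into faces cut out by $\Delta$; informally, $\Delta^*$ is a combinatorial (Poincaré-type) dual of $\Delta$, interchanging the arcs of $\Delta$ with the faces they bound. The whole argument rests on the following structural lemma, which I regard as the main point: since $\Delta$ is admissible \emph{and maximal}, every connected component $U$ of $S\setminus\Delta$ is an open disc carrying exactly one \emph{${\rc}$-datum} --- meaning either a single marked point of $M_{\rc}$ on the part of $\partial U$ coming from $\partial S$, or a single puncture of $P_{\rc}$ in its interior, but not both. Consequently each closed face is elementary: either a polygon whose edges are arcs of $\Delta$ together with boundary segments clustered around one point $x_U\in M_{\rc}$, or a once-punctured polygon all of whose edges lie in $\Delta$, with the puncture in the role of $x_U$.

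I would prove this lemma in two halves. For ``at most one'' I use maximality: if a face $U$ carried two ${\rc}$-data (two boundary points, a boundary point and a puncture, or two punctures), or had positive genus, I can draw inside $U$ a ${\gc}$-arc $\gamma$ separating the two data (resp. cutting a handle); one checks that $\Delta\cup\{\gamma\}$ is still a valid admissible collection, since the pieces into which $\gamma$ divides $U$ still each meet a puncture or a boundary segment, contradicting the maximality of $\Delta$. The same move forces $U$ to be a disc. For ``at least one'' I use admissibility directly: a face with no puncture and no boundary segment on its boundary is forbidden, and any face whose boundary meets $\partial S$ picks up a point of $M_{\rc}$ because the marked points alternate along each boundary component. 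Bookkeeping for the degenerate faces --- once-punctured polygons, faces glued to themselves along an arc, polygons meeting several boundary segments clustered at one marked point --- is the fiddly part here.

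Granting the lemma, I build $\Delta^*$ from the dual graph $\Gamma$ of $\Delta$: one vertex in each face, and for each arc $\gamma\in\Delta$ an edge joining the (one or two) faces incident to $\gamma$, drawn transverse to $\gamma$. When the faces at an edge of $\Gamma$ have boundary ${\rc}$-data $x$ and $y$ (possibly equal), the corresponding arc of $\Delta^*$ is the simple ${\rc}$-arc from $x$ to $y$ that runs along this edge and crosses $\gamma$ exactly once; when a datum is a puncture, the arc instead winds once around that puncture from inside the adjacent face. In ``nested'' configurations --- for instance a face surrounded by punctured faces --- a dual arc may cross its partner arc of $\Delta$ twice rather than once, and the naive ``one transverse dual arc per arc of $\Delta$'' must be replaced by dual arcs that encircle whole punctured faces; but in every case one obtains a finite collection of pairwise non-crossing essential ${\rc}$-arcs, each of which has nonempty intersection with exactly one arc of $\Delta$ --- which is the assertion of the proposition. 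That $\Delta^*$ is admissible I would deduce by transporting admissibility through the face correspondence: dually, each arc of $\Delta$ becomes a ``${\gc}$-datum'', each face of $\Delta^*$ carries exactly one of them, so no forbidden region is created; and $\Delta^*$ is maximal because its faces are then all elementary (equivalently, $|\Delta^*|$ equals the common cardinality of all admissible dissections of $(S,M,P)$, by an Euler-characteristic count).

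For uniqueness, let $\Delta'$ be any admissible ${\rc}$-dissection with the stated intersection property. Each arc $\delta\in\Delta'$ meets exactly one arc $\gamma\in\Delta$, so up to homotopy $\delta$ is confined to the closure of the union of the (at most two) faces of $\Delta$ adjacent to $\gamma$; inside that sub-disc, with the endpoints of $\delta$ pinned in $M_{\rc}$ and the winding around any enclosed puncture determined, the homotopy class of $\delta$ is forced, so $\delta$ is homotopic to one of the arcs of $\Delta^*$. Hence $\Delta'\subseteq\Delta^*$ up to homotopy, and since every subset of an admissible collection is again admissible (its faces being unions of faces of the larger collection), a proper subset cannot be maximal, so $\Delta'=\Delta^*$. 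The principal obstacle throughout is the structural lemma of the second paragraph --- converting ``maximal'' into the rigid local classification of the faces, uniformly across all the small and self-folded cases; once it is available, both the construction and the uniqueness argument reduce to bookkeeping, the only genuine subtlety being the behaviour of the dual arcs around punctured faces.
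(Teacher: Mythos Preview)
The paper does not give its own proof of this proposition; it simply quotes it from \cite{APS}. So there is no in-paper argument to compare against, and your outline is essentially the standard one: classify the faces of a maximal admissible ${\gc}$-dissection via the structural lemma, then build $\Delta^*$ as the combinatorial dual. That skeleton is correct, and your sketch of the structural lemma (each face is a disc carrying exactly one ${\rc}$-datum) and of uniqueness is sound.

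The genuine gap is in your treatment of punctured faces. You propose that when a face carries a puncture, the dual arc ``winds once around that puncture'' and may therefore cross its partner ${\gc}$-arc twice, and that in nested situations one replaces the naive dual arc by arcs ``encircling whole punctured faces''. This does not work beyond the once-punctured monogon. Consider a once-punctured bigon face $F$ with sides $\gamma_1,\gamma_2$ (your own structural lemma allows such faces, and one checks directly that no further ${\gc}$-arc can be added inside $F$ without creating a forbidden region). Any ${\rc}$-arc that enters $F$ through $\gamma_i$, encircles the puncture, and exits again through $\gamma_i$ will, for $i=1$ and $i=2$, produce two loops around the same puncture that necessarily intersect inside $F$; and any ${\rc}$-arc entering through $\gamma_1$ and exiting through $\gamma_2$ meets two arcs of $\Delta$. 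So with endpoints restricted to $M_{\rc}$ there is \emph{no} admissible ${\rc}$-dissection with the required property, and the proposition would be false.

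The resolution is that in the framework of \cite{APS} a ${\rc}$-arc is permitted to have an endpoint at a puncture of $P_{\rc}$ (the asymmetry $P=P_{\rc}$ is precisely what makes the duality work; the phrasing of the definition reproduced in this paper is slightly abbreviated on this point). With that convention the construction is the obvious one: the dual arc for a side $\gamma$ of a once-punctured polygon simply runs from the ${\rc}$-datum of the neighbouring face to the puncture, crossing $\gamma$ exactly once. Once you allow this, your ``winding'' and ``encircling'' manoeuvres are unnecessary, every dual arc meets its unique partner in a single point, and the rest of your argument (including the uniqueness paragraph, where the clause ``winding around any enclosed puncture determined'' can then be dropped) goes through cleanly.
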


\begin{definition}\cite[Definition~1.14]{APS}
The dissections $\Delta$ and $\Delta^*$ are called \emph{dual dissections}.    
\end{definition}

\section{Admissible dissections and gentle algebras}
\begin{definition}\cite[Definition~2.1]{APS}
Let $\Delta$ be an admissible $\color{teal}\circ$-dissection of a marked surface $(S, M, P)$. The $k$-algebra $A(\Delta)$ is the quotient of the path algebra of the quiver $Q(\Delta)$ by the ideal $I(\Delta)$ defined as follows:
\begin{itemize}
    \item the vertices of $Q(\Delta)$ are in bijection with the $\color{teal}\circ$-arcs in $\Delta$;
    \item there is an arrow $i \to j$ in $Q(\Delta)$ whenever the $\color{teal}\circ$-arcs $i$ and $j$ meet at a marked point $\color{teal}\circ$, with $i$ preceding $j$ in the counter-clockwise order around~$\color{teal}\circ$, and with no other arc coming to $\color{teal}\circ$ between $i$ and $j$. 
    \item the ideal $I(\Delta)$ is generated by the following relations: whenever $i$ and $j$ meet at a marked point as above, and the other end of $j$ meets $k$ at a marked point as above, then the composition of the corresponding arrows~$i \to j$ and $j \to k$ is a relation.   
\end{itemize}  
\end{definition} 
\begin{example}
    Let $(S,M,P)$ be the marked surface in Figure \ref{marksurf}, and $\Delta$ the $\color{teal}\circ$-dissection dual to the depicted $\color{red}\bullet$-dissection. Then $A(\Delta)$ is the path algebra of the quiver with relations shown in Figure \ref{gen_alg}.
    \begin{figure}[H]
        \centering
        \begin{subfigure}{.5\textwidth}
        \centering
        \includegraphics[scale=0.3]{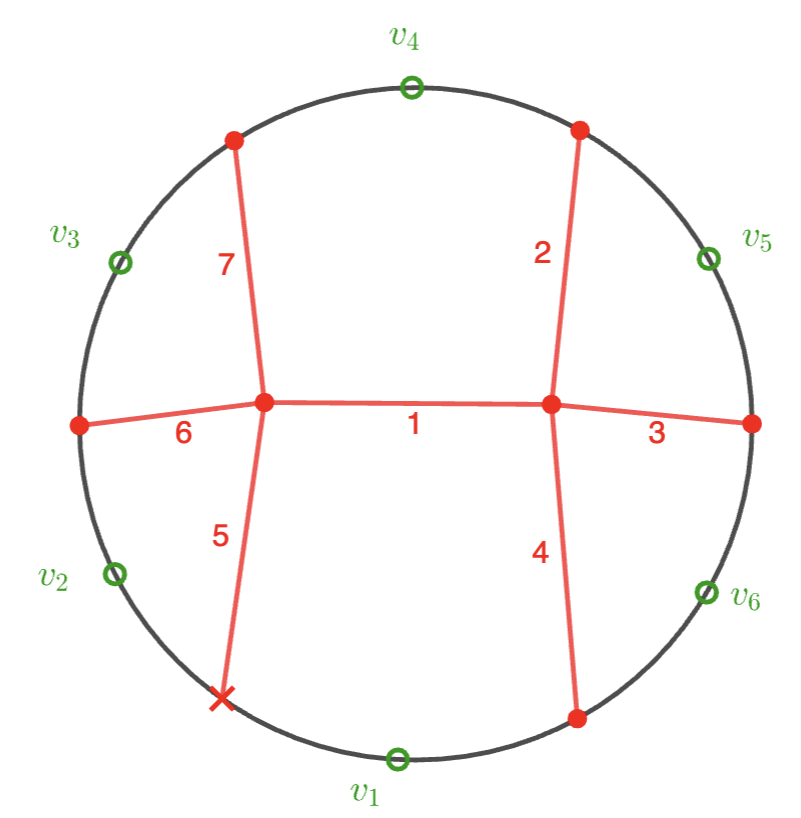}
        \caption{}
        \label{marksurf}
        \end{subfigure}%
        \begin{subfigure}{.5\textwidth}
        \centering
        \includegraphics[scale=0.4]{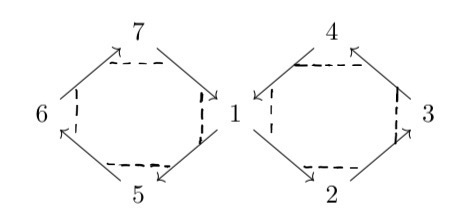}
        \caption{}
        \label{gen_alg}
        \end{subfigure}
        \caption{}
    \end{figure}

\end{example}
\begin{theorem}\cite[Theorem~2.3]{APS}\label{MStoGA}
The assignment
$((S, M, P), \Delta)\to
A(\Delta)$ defines a bijection from the set of homeomorphism classes of marked surfaces $(S, M, P)$ with an admissible dissection to the
set of isomorphism classes of gentle algebras.
\end{theorem}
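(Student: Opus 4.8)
I would first record the standard fact that isomorphic gentle algebras have isomorphic bound quivers $(Q,I)$, so that the statement becomes purely combinatorial once the surface side is translated; the proof then splits into well-definedness, the construction of an explicit inverse (hence surjectivity), and injectivity. For well-definedness and the gentle property, run a local analysis at each $v\in M_{\gc}$. The $\gc$-arcs of $\Delta$ incident to $v$ carry a linear --- not cyclic --- order from the counter-clockwise orientation near $v$: linear because $v\in\partial S$, so a boundary segment closes off the fan on either side. By the definition of $A(\Delta)$ the arrows of $Q(\Delta)$ at $v$ join consecutive arcs of this fan, while a length-two path $i\to j\to k$ lies in $I(\Delta)$ precisely when its two arrows sit at the two distinct endpoints of $j$. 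As a $\gc$-arc has exactly two endpoints and, at each, at most one immediate predecessor and one immediate successor in the local order, one reads off that at most two arrows start and at most two end at each vertex, and that each arrow extends in at most one way to a relation and in at most one way to a permitted path on either side; together with $I(\Delta)$ being generated in degree two, this is the definition of a gentle algebra. Admissibility rules out the degenerate pictures (two arcs bounding a disc, and the like) and, with the absence of cyclic permitted fans just noted, forces $Q(\Delta)$ finite and $A(\Delta)$ finite-dimensional; and a homeomorphism carrying $\Delta$ to $\Delta'$ transports this local data, so it induces $A(\Delta)\cong A(\Delta')$.

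\textbf{Surjectivity via an explicit inverse.} Given a gentle algebra $A=kQ/I$, I would reconstruct a marked surface from its \emph{permitted threads} --- the maximal directed paths $c_1\cdots c_m$ with $c_\ell c_{\ell+1}\notin I$ for every $\ell$, completed by a formal trivial thread at each vertex and side where no arrow can be appended --- and its \emph{forbidden threads} --- the maximal directed paths each of whose length-two subpaths lies in $I$, completed similarly. The gentle axioms make every arrow lie in exactly one permitted and one forbidden thread on each of its two sides, so both families organise into fans. Assigning one point of $M_{\gc}$ to each permitted thread and one point of $M_{\rc}$ to each forbidden thread --- replaced by an interior puncture of $P_{\rc}$ when the thread is a cyclic path --- and gluing these fans into an oriented surface in the standard ribbon-graph way yields a marked surface $(S_A,M_A,P_A)$ with an admissible $\gc$-dissection $\Delta_A$ whose arcs are indexed by $Q_0$. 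Unwinding the definition of $A(\Delta_A)$ returns precisely $Q$ and $I$, so $A(\Delta_A)\cong A$ and the assignment is surjective.

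\textbf{Injectivity.} It remains to check that, starting from $((S,M,P),\Delta)$, the reconstruction returns the same pair up to homeomorphism. The crux is that the permitted and forbidden threads of $A(\Delta)$ match, respectively, the fans of $\Delta$ at the points of $M_{\gc}$ and the fans of the dual dissection $\Delta^*$ at the points of $M_{\rc}\cup P_{\rc}$; hence the thickening of the previous paragraph rebuilds a regular neighbourhood of $\Delta\cup\Delta^*$ in $S$. By admissibility of $\Delta$ and of $\Delta^*$, the graph $\Delta\cup\Delta^*$ is a spine of $S\setminus P$, so this neighbourhood already determines $S$, together with its marked points and its dissection, up to homeomorphism. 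Since the reconstruction depends only on $(Q,I)$ up to isomorphism --- equivalently, by the opening remark, on the algebra up to isomorphism --- the two constructions are mutually inverse, and the assignment of the theorem is a bijection.

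\textbf{The main obstacle.} The delicate step is the last one. It is not enough to match the numerical invariants of $S$ --- its genus, its number of boundary components, its number of punctures --- which can be extracted from $|Q_0|$, $|Q_1|$ and the thread counts by an Euler-characteristic computation; one must show that the whole ribbon graph $\Delta\cup\Delta^*$, with its cyclic orderings at vertices and its face structure, is reconstructed from $(Q,I)$, and then invoke that a ribbon graph determines an oriented surface-with-dissection up to homeomorphism. Keeping this dictionary honest around trivial threads, vertices of low valency, and the split between boundary $\rc$-points and interior punctures is where essentially all of the bookkeeping lives.
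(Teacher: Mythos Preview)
The paper does not prove this theorem; it is quoted as \cite[Theorem~2.3]{APS} and used as background input for the rest of the article. There is therefore no in-paper argument to compare your proposal against.

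That said, your outline is essentially the proof given in the cited source (and its companion \cite{OPS}): gentleness is read off from the linear fan of $\gc$-arcs at each boundary $\gc$-point; the inverse is built from the permitted and forbidden threads of $(Q,I)$ via a ribbon-graph thickening, with cyclic forbidden threads producing the punctures of $P_{\rc}$; and injectivity is obtained by recognising $\Delta\cup\Delta^*$ as a spine of $S\setminus P$, so that the ribbon-graph data already determines the homeomorphism type of the dissected surface. Your identification of the delicate bookkeeping---trivial threads, vertices of low valency, and the distinction between boundary $\rc$-points and interior punctures---as the place where the real work lives is accurate. The sketch is sound; fleshing it out would amount to reproducing the argument of \cite{APS}.
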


Let $\Lambda$ be a gentle algebra of rank $n$. Let $((S,M,P),\Delta)$ be the marked surface with an admissible dissection obtained from the previous theorem. We will give another model of the bounded derived category of $\Lambda$ using this surface. 

We start by choosing a labelling of the $\gc$-points in $M$. We replace each $\gc$-point $v$ on a boundary component between two $\rc$-points with a collection of $\mathbb{Z}$-indexed $\color{blue}\times$-points $T=\{T_{(i,v)}\mid i\in\ZZ\}$ arranged in descending order along the orientation of the component. Moreover, we do so in such a way that the set~$T$ has precisely two limit points given by the two $\rc$-points. For a point $T_{(i,v)}$, we define~$\epsilon(T_{(i,v)}):=i$ and $\sigma(T_{(i,v)}):=v$. We consider special arcs joining these~$\color{blue}\times$-points which we call `slaloms'.

\begin{example}
The above process transforms the marked surface in Figure \ref{marksurf2} to the surface in Figure \ref{transform}.
    \begin{figure}[H]
        \centering
        \begin{subfigure}{.5\textwidth}
        \centering
        \includegraphics[scale=0.3]{marked_surface.png}
        \caption{}
        \label{marksurf2}
        \end{subfigure}%
        \begin{subfigure}{.5\textwidth}
        \centering
        \includegraphics[scale=0.5]{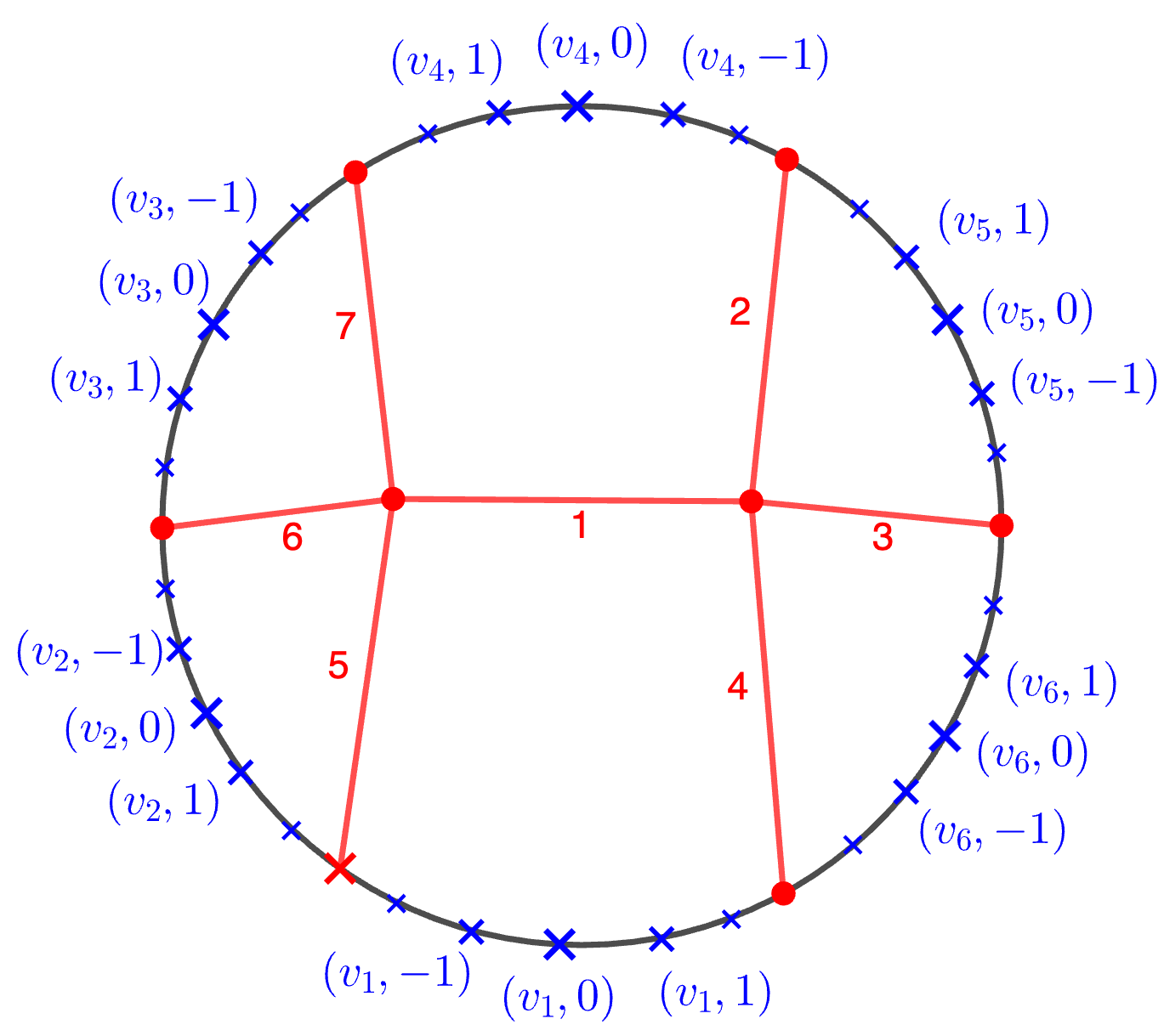}
        \caption{}
        \label{transform}
        \end{subfigure}
        \caption{}
    \end{figure}
\end{example}

\begin{definition}
A $\color{blue}\times$-arc is a smooth map $\gamma$ from the interval $[0, 1]$ to $S \setminus P$ such that its endpoints $\gamma(0)$ and $\gamma(1)$ are $\color{blue}\times$-points. The curve $\gamma$
is required not to be contractible to a $\color{blue}\times$-point.
\end{definition}

\begin{definition}
    Let $\gamma$ be a $\color{blue}\times$-arc. We assume $\gamma$ intersects the arcs of the dual $\rc$-dissection $\Delta^*$ at a finite number of points, minimally and transversally. Define $$f_{\gamma}:\gamma \cap \Delta^*\to\mathbb{Z}$$ as follows. The orientation of $\gamma$ induces a total order on the points in $\gamma\cap\Delta^*$. Let $p_0$ be the initial point in this order. Then $f(p_0):=\epsilon(s(\gamma))$. Next, if $p$ and $q$ are in~$\gamma \cap \Delta^*$ and $q$ is the successor of $p$, then $\gamma$ enters a polygon enclosed by~$\color{red}\bullet$-arcs of $\Delta^*$ via $p$ and leaves it via $q$. If the $\color{blue}\times$-points in this polygon are to the left of $\gamma$, then $f(q) := f(p) + 1$; otherwise, $f(q) := f(p) - 1$.

    A $\color{blue}\times$-arc $\gamma$ is called a \emph{slalom} if $f(q_0)=\epsilon(t(\gamma))$, where $q_0$ is the last point of~$\gamma \cap \Delta^*$.
\end{definition}

\begin{example}
    Let $\gamma_1$ be the slalom in Figure \ref{slalom}. Then the corresponding function $f_{\gamma_1}$ is given as $$f_{\gamma_1}(A)=0,\ f_{\gamma_1}(B)=-1,\ f_{\gamma_1}(C)=0.$$
\end{example}

Let $\gamma$ be a slalom connecting $T_{(i,v_1)}$ to $T_{(j,v_2)}$. We consider the homotopy class of $\gamma$ where the homotopies are allowed to move the endpoints of $\gamma$ along the boundary without crossing a $\rc$-point. Then this homotopy class contains a unique (up to homotopy) $\color{teal} \circ$-arc connecting $v_1$ and~$v_2$. We denote this $\color{teal} \circ$-arc by $\sigma_\gamma$. Moreover, the function $f_\gamma$ naturally defines a grading on the arc $\sigma_\gamma$. We will denote the graded $\color{teal} \circ$-arc $(\sigma_\gamma,f_\gamma)$ as $\Sigma(\gamma)$.

\begin{example}
    Let $\gamma_1$ be the slalom in Figure \ref{slalom}. Then using the procedure described above, $\gamma_1$ is transformed into the graded arc $\Sigma(\gamma_1)$ shown in Figure \ref{grad_arc}. 
    \begin{figure}[H]
        \centering
        \begin{subfigure}{.5\textwidth}
        \centering
        \includegraphics[scale=0.16]{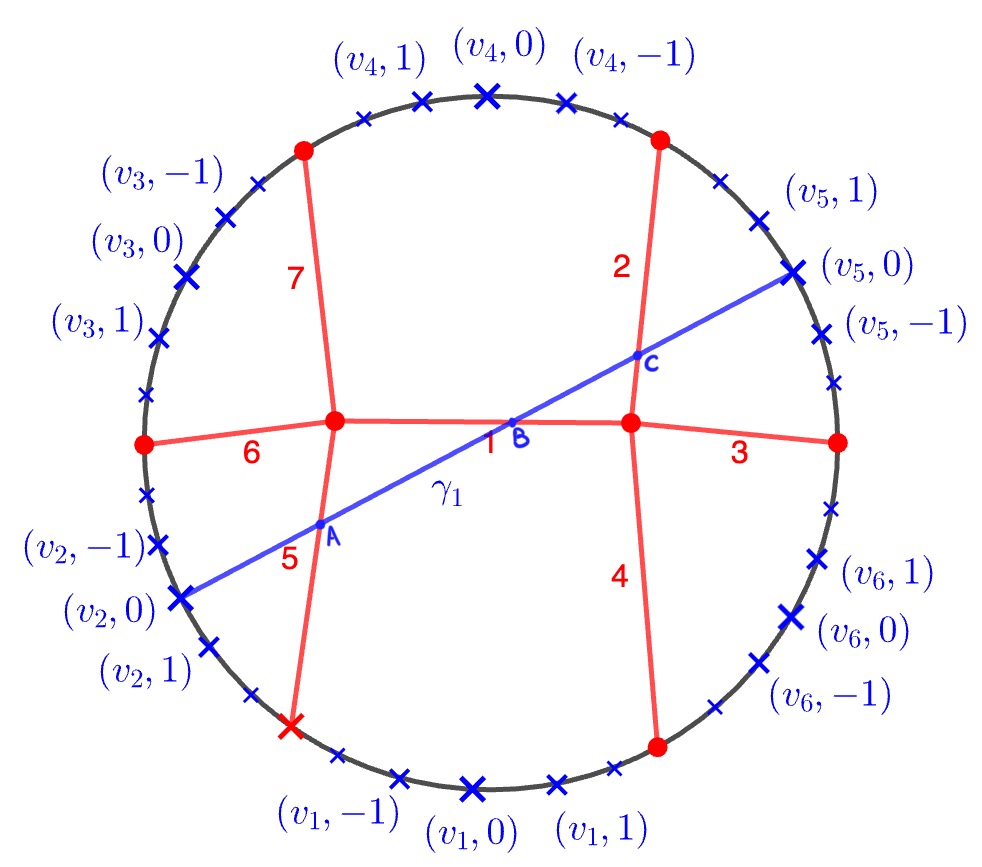}
        \caption{}
        \label{slalom}
        \end{subfigure}%
        \begin{subfigure}{.5\textwidth}
        \centering
        \includegraphics[scale=0.17]{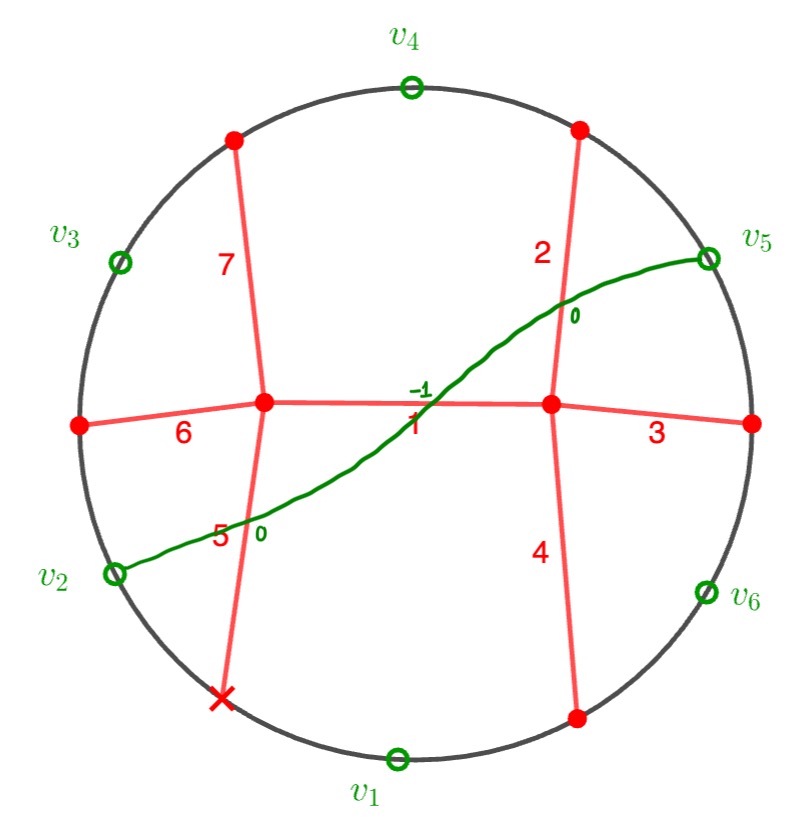}
        \caption{}
        \label{grad_arc}
        \end{subfigure}
        \caption{}
    \end{figure}
\end{example}

%\begin{lemma}
%Let $\gamma_1$ and $\gamma_2$ be two slaloms. If $\sigma_{\gamma_1}$ and $\sigma_{\gamma_2}$ intersect in the interior of $S$, then so do $\gamma_1$ and $\gamma_2$.
%\end{lemma}

The map $\Sigma$ defines a bijection between the set of homotopy classes of slaloms with the set of homotopy classes of graded $\color{teal} \circ$-arcs, as defined in \cite[Definition~2.4]{APS}, which are, in turn, in bijection with certain indecomposable objects of~$K^{-,b}(\proj \La)$ called (finite) string objects. We denote by $P^\bullet_\gamma$ the object associated to $\Sigma(\gamma)$. For two graded $\color{teal} \circ$-arcs $(\mu_1,f_1)$ and $(\mu_2,f_2)$, we will use the description of $\Hom(P^\bullet_{(\mu_1,f_1)},P^\bullet_{(\mu_2,f_2)})$ in terms of the `graded intersection points' of $(\mu_1,f_1)$ and~$(\mu_2,f_2)$, as given in \cite{OPS}.

\begin{definition}
    Let $\gamma_1$ and $\gamma_2$ be two slaloms and $T$ an intersection point of $\gamma_1$ and $\gamma_2$ lying in the interior of $S$. Then $T$ is called \textbf{contractible} if (one of) the region(s) bounded by the following three segments is contractible (Figure \ref{contractible}):
    \begin{enumerate}
        \item the part of $\gamma_1$ between $T$ and the boundary.
        \item the part of $\gamma_2$ between $T$ and the boundary.
        \item the part of the boundary between the endpoints of $\gamma_1$ and $\gamma_2$.
    \end{enumerate}
    \begin{figure}[!h]
        \centering
        \includegraphics[scale=0.1]{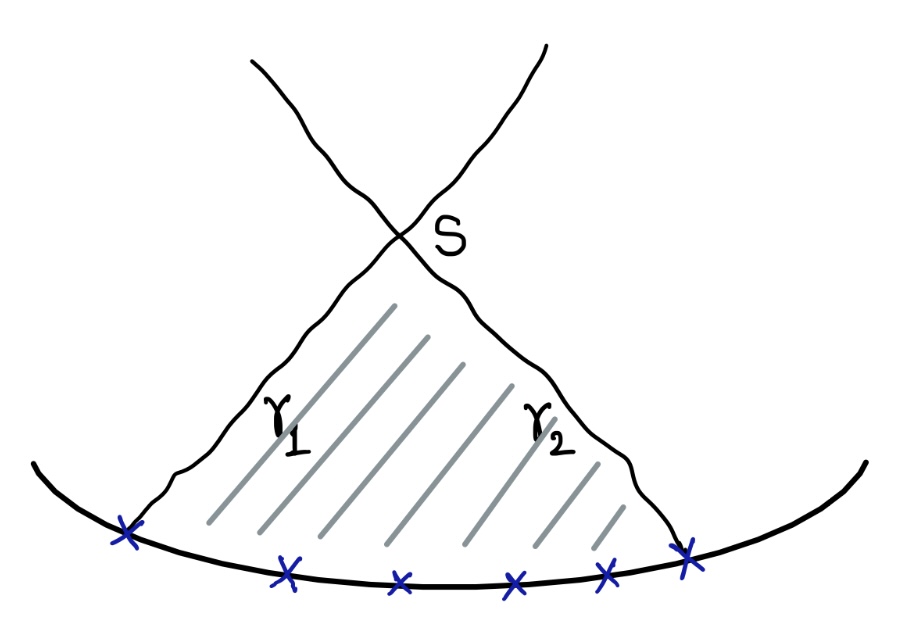}
        \caption{$S$ is a contractible intersection point of $\gamma_1$ and $\gamma_2$}
        \label{contractible}
    \end{figure}
\end{definition}

\begin{theorem}\label{main}
    Let $\gamma_1$ and $\gamma_2$ be two slaloms. Then we have the following:
    \begin{enumerate}
        \item There is a bijection between the set of interior intersection points of $\Sigma({\gamma_1})$ and $\Sigma({\gamma_2})$, and the set of non-contractible intersection points of $\gamma_1$ and~$\gamma_2$.
        \item There is a bijection between the set of boundary intersection points of~$\Sigma({\gamma_1})$ and $\Sigma({\gamma_2})$ of positive degree, and the set of contractible intersection points of $\gamma_1$ and $\gamma_2$.
        \item There is a bijection between the set of boundary intersection points of~$\Sigma({\gamma_1})$ and $\Sigma({\gamma_2})$ of degree $0$, and the boundary intersection points of $\gamma_1$ and $\gamma_2$.
    \end{enumerate}
\end{theorem}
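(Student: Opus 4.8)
The plan is to compare minimal-position representatives of $\gamma_1,\gamma_2$ with those of $\Sigma(\gamma_1),\Sigma(\gamma_2)$ through the contraction that undoes the replacement of $\gc$-points by intervals of $\color{blue}\times$-points. Concretely, the surface carrying slaloms is obtained from $(S,M,P)$ by replacing each $\gc$-point $v$ with an interval $I_v\subseteq\partial S$ consisting of the $\color{blue}\times$-points $\{T_{(i,v)}\mid i\in\ZZ\}$ accumulating at the two $\rc$-points bounding $I_v$; let $c$ denote the map collapsing each $I_v$ back to $v$. By the construction of $\Sigma$ we have $\Sigma(\gamma_i)=(\sigma_{\gamma_i},f_{\gamma_i})$ with $\sigma_{\gamma_i}$ homotopic to $\mu_i:=c(\gamma_i)$, so it suffices to match the intersection data of $\gamma_1,\gamma_2$ with that of $\mu_1,\mu_2$, where the notions of interior and boundary graded intersection point and their degrees are those of \cite{OPS}. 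I would first fix representatives of $\gamma_1,\gamma_2$ that are transverse and minimally intersecting, both to one another and to $\Delta^*$ (the latter being needed for $f_{\gamma_i}$ to be defined), so that in particular they bound no bigon of any kind.

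For part (1): after applying $c$, the curves $\mu_1,\mu_2$ have no interior bigon, since an interior bigon of $\mu_1,\mu_2$ can be isotoped off the $I_v$'s and then lifted along $c$ to an interior bigon of $\gamma_1,\gamma_2$. They may, however, bound disks with one corner at a common $\gc$-endpoint $v$ and the other corner at an interior crossing, and I claim these correspond bijectively to the contractible intersection points of $\gamma_1,\gamma_2$: blowing $v$ back up to $I_v$ turns such a disk into a disk bounded by a subarc of $\gamma_1$, a subarc of $\gamma_2$ and a subinterval of $I_v$, i.e. a contractible intersection point, and conversely contracting a contractible intersection point produces such a disk, once one knows its boundary segment lies in a single $I_v$. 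Removing all these disks puts $\mu_1,\mu_2$ in minimal position; its interior crossings are then precisely the images under $c$ of the non-contractible interior crossings of $\gamma_1,\gamma_2$, and since interior crossings lie in the region where $c$ is injective, distinct ones are not identified. This gives (1).

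For parts (2) and (3): by the dictionary of \cite{OPS}, each disk removed above, sitting at a $\gc$-point $v$, records one boundary intersection point of $(\mu_1,f_{\gamma_1})$ and $(\mu_2,f_{\gamma_2})$ at $v$, and conversely every boundary intersection point arises this way; so it remains to compute degrees. If the disk comes from a contractible intersection point of $\gamma_1,\gamma_2$, the two slalom-endpoints in $I_v$ are distinct, say $T_{(a,v)}$ and $T_{(b,v)}$; tracking the grading functions through $c$ and using that no $\rc$-point lies in the intervening subinterval of $I_v$, the degree of this boundary intersection comes out as $|a-b|>0$, with the sign pinned down so that it is positive at exactly one of the two $\gc$-ends of the crossing --- this proves (2), the converse being the same blow-up construction (positivity forces $a\neq b$, hence a genuine contractible region). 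If instead $\gamma_1,\gamma_2$ share a $\color{blue}\times$-endpoint $T_{(a,v)}$, then $\mu_1,\mu_2$ end at $v$ with both gradings taking the value $a$ there and with no $\color{blue}\times$-point between them, forcing degree $0$; conversely a degree-$0$ boundary intersection of $(\mu_1,f_{\gamma_1}),(\mu_2,f_{\gamma_2})$ forces equal gradings at the common endpoint, hence equal $\color{blue}\times$-endpoints after blowing up. This gives (3).

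The main obstacle is the lemma used throughout parts (1)--(2): that the boundary segment of a contractible intersection point of two slaloms in minimal position contains no $\rc$-point. Without it, $c$ would send such a crossing to a genuine interior crossing of $\mu_1,\mu_2$ with three distinct corners rather than a removable bigon, breaking the trichotomy, and the degree would depend on how many $\rc$-points are passed. I expect this to follow from minimality: a contractible disk whose boundary segment crossed a $\rc$-point $b$ would contain the initial segment of a $\Delta^*$-arc emanating from $b$, and chasing that arc to its first crossing with $\gamma_1$ or $\gamma_2$ one extracts a forbidden bigon of some $\gamma_i$ --- with $\Delta^*$, contradicting minimal transversality, or (after pushing across) with the other slalom, contradicting minimal position --- with the slalom condition $f(q_0)=\epsilon(t(\gamma))$ ruling out the degenerate configurations. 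Everything else --- homotopy-invariance of the three notions of intersection and the verification that the assignments in (1)--(3) are mutually inverse --- is routine bookkeeping once minimal representatives are fixed.
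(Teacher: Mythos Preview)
Your approach is essentially the paper's: both pass between slaloms and graded $\gc$-arcs via the homotopy that slides endpoints along the boundary (your contraction $c$), classify intersection types by minimal-position and bigon arguments, and read off degrees from the $\epsilon$-labels at the endpoints. Where the paper reasons directly from local pictures, you organize the same logic through $c$ and bigon-removal, but the substance is identical.

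One caveat on your sketch of the key lemma (that the boundary segment of a contractible intersection in minimal position contains no $\rc$-point): chasing a $\Delta^*$-arc from such a $\rc$-point $b$ to its first crossing with some $\gamma_i$ yields a \emph{triangle} --- one side on $\partial S$ from the endpoint $q_i$ of $\gamma_i$ to $b$ --- rather than a bigon, so neither minimal transversality with $\Delta^*$ nor minimal position of $\gamma_1,\gamma_2$ is directly contradicted; pushing $\gamma_i$ across that triangle would drag its endpoint past $b$, which is not an allowed homotopy. The paper does not prove this lemma either, simply asserting it by reference to Figure~\ref{pos_deg2}, so your singling it out as the one nontrivial step is apt; it just needs a different justification than the one you outline.
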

\begin{proof}
Let $(\mu_1,f_1):=\Sigma(\gamma_1)$ and $(\mu_2, f_2):=\Sigma(\gamma_2)$. Assume that we have chosen homotopy representatives such that $\mu_1$ and $\mu_2$ intersect minimally, and so do~$\gamma_1$ and $\gamma_2$. We prove the three statements one by one.

\begin{enumerate}
    \item Let $p$ be an interior intersection point of $\Sigma(\gamma_1)$ and $\Sigma(\gamma_2)$. It is possible to choose homotopy representatives of $\gamma_1$ and $\gamma_2$ such that $p$ is also an interior intersection point of $\gamma_1$ and $\gamma_2$. Moreover, $p$ is non-contractible because, otherwise, $\mu_1$ and $\mu_2$ do not intersect minimally as we can choose homotopy representatives that do not intersect at $p$ (Figure \ref{CIP}). 
    \begin{figure}[!h]
        \centering
        \includegraphics[scale=0.15]{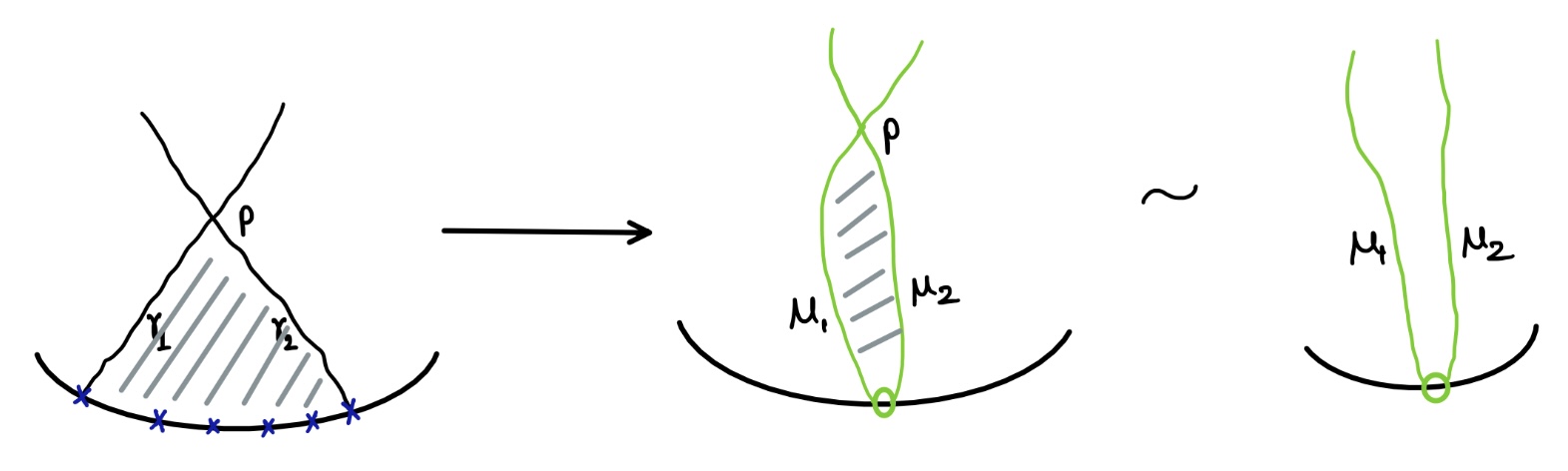}
        \caption{$p$ has to be a non-contractible intersection point of $\gamma_1$ and $\gamma_2$}
        \label{CIP}
    \end{figure}
    Conversely, if $p$ is a non-contractible intersection point of $\gamma_1$ and $\gamma_2$, then it is still an interior intersection point of $\Sigma(\gamma_1)$ and $\Sigma(\gamma_2)$ when they intersect minimally. 
    \item Suppose $p$ is a boundary intersection point of $\Sigma(\gamma_1)$ and $\Sigma(\gamma_2)$ of degree~$>0$. Then, without loss of generality, we can assume that we have the configuration around $p$ as shown in Figure \ref{pos_deg} with~$f_2(p_2)>f_1(p_1)$. 
    \begin{figure}[!h]
        \centering
        \begin{subfigure}{.5\textwidth}
        \centering
        \includegraphics[scale=0.15]{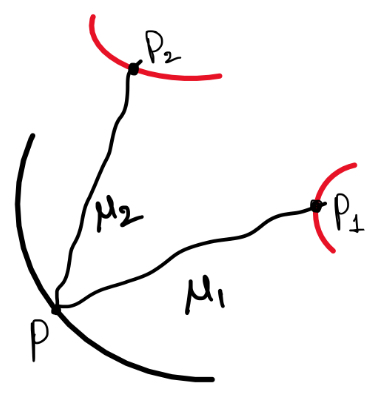}
        \caption{}
        \label{pos_deg}
        \end{subfigure}%
        \begin{subfigure}{.5\textwidth}
        \centering
        \includegraphics[scale=0.15]{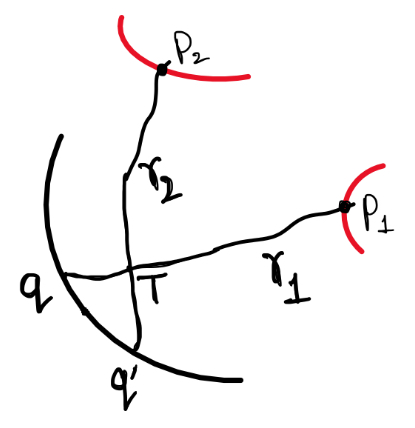}
        \caption{}
        \label{pos_deg2}
        \end{subfigure}
        \caption{}
    \end{figure}
    This implies that $\epsilon(q')>\epsilon(q)$, where $q$ (resp.~$q'$) is the endpoint of $\gamma_1$ (resp.~$\gamma_2$) such that~$\sigma(q)=p$ (resp.~$\sigma(q')=p$). Thus, the configuration of $\gamma_1$ and $\gamma_2$ is as shown in Figure \ref{pos_deg2}. Since $\mu_1$ and $\mu_2$ are homotopic to $\gamma_1$ and $\gamma_2$ respectively, and they intersect on the boundary, the point $T$ has to be a contractible intersection point of $\gamma_1$ and $\gamma_2$. 
    
    Conversely, let $T$ be a contractible intersection point of $\gamma_1$ and $\gamma_2$. Then, without loss of generality, we can assume that $\gamma_1$ and $\gamma_2$ are as shown in Figure \ref{pos_deg2}. In particular, the part of the boundary between $q$ and $q'$ does not contain any $\color{red}\bullet$-points, which implies that $\sigma(q)=\sigma(q')$, and hence $\Sigma({\gamma_1})$ and $\Sigma({\gamma_2})$ intersect on the boundary. Moreover, this intersection point has a positive degree because $f_2(p_2)=\epsilon(q')>\epsilon(q)=f_1(p_1)$.

    \item Suppose $p$ is a boundary intersection point of $\Sigma(\gamma_1)$ and $\Sigma(\gamma_2)$ of degree~$0$. Then we can assume that the configuration around $p$ is as shown in Figure~\ref{pos_deg} such that~$f_1(p_1)=f_2(p_2)$. This implies that $\epsilon(q)=\epsilon(q')=f_1(p_1)$, where $q$ (resp.~$q'$) is the endpoint of $\gamma_1$ (resp.~$\gamma_2$) such that $\sigma(q)=p$ (resp.~$\sigma(q')=p$). Thus, $q=q'$ is a boundary intersection point of $\gamma_1$ and~$\gamma_2$. Conversely, if $q$ is a boundary intersection point of $\gamma_1$ and~$\gamma_2$, then, by definition, $\Sigma(\gamma_1)$ and $\Sigma(\gamma_2)$ have an intersection point on the boundary of degree $0$. 
\end{enumerate}
\end{proof}
\begin{proposition}\label{posext}
    Two slaloms $\gamma_1,\gamma_2$ intersect in the interior of $S$ if and only if either $\Ext^i(P^\bullet_{\gamma_1},P^\bullet_{\gamma_2})\neq 0$ for some $i>0$ or $\Ext^i(P^\bullet_{\gamma_2},P^\bullet_{\gamma_1})\neq 0$ for some $i>0$.
\end{proposition}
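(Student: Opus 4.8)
The plan is to deduce the proposition from Theorem~\ref{main} together with the description of the Hom-spaces between string objects given in \cite{OPS}. Write $\Sigma(\gamma_1)=(\mu_1,f_1)$ and $\Sigma(\gamma_2)=(\mu_2,f_2)$, and fix homotopy representatives so that $\mu_1,\mu_2$ intersect minimally and $\gamma_1,\gamma_2$ intersect minimally. Recall from \cite{OPS} that, for each pair of graded arcs, the graded vector space $\bigoplus_{i\in\ZZ}\Ext^i(P^\bullet_{\gamma_1},P^\bullet_{\gamma_2})$ has a basis in bijection with the oriented graded intersection points of $(\mu_1,f_1)$ and $(\mu_2,f_2)$ directed from the first to the second, each such point contributing in a prescribed homological degree; moreover a boundary intersection point of degree $0$ contributes in degree $0$, while an interior intersection point contributes a vector in a \emph{positive} degree to at least one of $\bigoplus_i\Ext^i(P^\bullet_{\gamma_1},P^\bullet_{\gamma_2})$ and $\bigoplus_i\Ext^i(P^\bullet_{\gamma_2},P^\bullet_{\gamma_1})$ (its two associated degrees being of the form $d$ and $1-d$). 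Call these facts (i) and (ii).

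For the ``only if'' direction, suppose $\gamma_1$ and $\gamma_2$ intersect in the interior of $S$ at a point $p$. If $p$ is contractible, Theorem~\ref{main}(2) produces a boundary intersection point of $\Sigma(\gamma_1)$ and $\Sigma(\gamma_2)$ of some positive degree $i$; by fact~(i) this gives a nonzero class in $\Ext^i(P^\bullet_{\gamma_1},P^\bullet_{\gamma_2})$ or $\Ext^i(P^\bullet_{\gamma_2},P^\bullet_{\gamma_1})$ with $i>0$. If $p$ is non-contractible, Theorem~\ref{main}(1) produces an interior intersection point of $\Sigma(\gamma_1)$ and $\Sigma(\gamma_2)$, and fact~(ii) again yields a nonzero $\Ext^i$ in some positive degree in one of the two directions.

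For the ``if'' direction I would argue by contraposition: assume $\gamma_1$ and $\gamma_2$ do not meet in the interior of $S$. Then they have neither contractible nor non-contractible interior intersection points, so Theorem~\ref{main}(1) and (2) force $\Sigma(\gamma_1)$ and $\Sigma(\gamma_2)$ to have no interior intersection point and no boundary intersection point of positive degree; hence all of their graded intersection points are boundary intersection points of degree $0$. By fact~(i), both $\bigoplus_i\Ext^i(P^\bullet_{\gamma_1},P^\bullet_{\gamma_2})$ and $\bigoplus_i\Ext^i(P^\bullet_{\gamma_2},P^\bullet_{\gamma_1})$ are concentrated in degree $0$, so $\Ext^i(P^\bullet_{\gamma_1},P^\bullet_{\gamma_2})=\Ext^i(P^\bullet_{\gamma_2},P^\bullet_{\gamma_1})=0$ for every $i\neq 0$, in particular for $i>0$.

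The part carrying the real weight is fact~(ii): one must check that an interior crossing of the graded arcs genuinely detects a \emph{positive} extension, and not merely a degree-$0$ morphism, in one of the two directions. This is the step I expect to be the main obstacle. To settle it I would unwind the OPS rule for the homological degree attached to an interior crossing: such a crossing sits inside a polygon of the dual dissection $\Delta^*$, through whose edges $\mu_1$ and $\mu_2$ enter and leave, and the two degrees read off for the two orientations differ from the corresponding differences of the grading functions $f_1,f_2$ by complementary corrections lying in $\{0,1\}$; consequently they sum to $1$, and at least one of them is $\geq 1$. Once (i) and (ii) are in place, the rest is the bookkeeping above, driven entirely by Theorem~\ref{main}.
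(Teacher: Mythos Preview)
Your argument is essentially the same as the paper's: both directions are deduced from Theorem~\ref{main} together with the OPS description of Hom-spaces, and your ``fact~(ii)'' is precisely \cite[Lemma~3.5]{APS}, which the paper simply cites rather than re-deriving. One small slip in your converse direction: after Theorem~\ref{main}(1) and (2) rule out interior crossings and positive-degree boundary crossings of $\Sigma(\gamma_1),\Sigma(\gamma_2)$, the remaining boundary intersection points can have degree $\leq 0$, not only degree $0$ (Theorem~\ref{main}(3) controls only the degree-$0$ ones); the paper accordingly writes ``zero or negative degree''. This does not affect your conclusion that $\Ext^{>0}$ vanishes, but your intermediate claim that both graded Hom-spaces are concentrated in degree $0$ is not justified.
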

\begin{proof}
    Suppose $\gamma_1$ and $\gamma_2$ intersect in the interior of $S$ at a point $T$. If $T$ is a contractible intersection point, then using Theorem \ref{main}, it corresponds to a boundary intersection point of $\Sigma(\gamma_1)$ and $\Sigma(\gamma_2)$ of positive degree. This implies that either $\Ext^i(P^\bullet_{\gamma_1},P^\bullet_{\gamma_2})\neq 0$ for some $i>0$ or $\Ext^i(P^\bullet_{\gamma_2},P^\bullet_{\gamma_1})\neq 0$ for some $i>0$. On the other hand, if $T$ is a non-contractible intersection point, then it corresponds to an interior intersection point of $\Sigma(\gamma_1)$ and $\Sigma(\gamma_2)$. Using \cite[Lemma~3.5]{APS}, we get that either $\Ext^i(P^\bullet_{\gamma_1},P^\bullet_{\gamma_2})\neq 0$ for some $i>0$ or $\Ext^i(P^\bullet_{\gamma_2},P^\bullet_{\gamma_1})\neq 0$ for some $i>0$.

    Conversely, suppose $\gamma_1$ and $\gamma_2$ do not intersect in the interior of $S$. Then either they do not intersect at all or they intersect on the boundary. In both cases, $\Sigma(\gamma_1)$ and $\Sigma(\gamma_2)$ can only have zero or negative degree boundary intersection points using Theorem \ref{main}. This implies that $\Ext^i(P^\bullet_{\gamma_1},P^\bullet_{\gamma_2})= 0$ for all~$i>0$ and $\Ext^i(P^\bullet_{\gamma_2},P^\bullet_{\gamma_1})= 0$ for all $i>0$.
\end{proof}

The above proposition helps us to give a characterization of presilting and silting objects in $K^{-,b}(\proj\La)$.
\begin{definition}
Say that a collection $\gamma_1,\gamma_2,\ldots, \gamma_m$ of slaloms is \textbf{mutually non-intersecting} if $\gamma_i$ does not intersect $\gamma_j$ in the interior of $\S$ for all $1\leq i,j\leq n$.
\end{definition}

\begin{corollary}
    There is a bijection between basic presilting objects in $K^{-,b}(\proj\La)$ and mutually non-intersecting collections of slaloms in $S_\La$.
\end{corollary}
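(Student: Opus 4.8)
The plan is to deduce the corollary directly from Proposition~\ref{posext} by decomposing a presilting object into indecomposables. Recall first that a basic object $X\in K^{-,b}(\proj\La)$ admits a decomposition $X=\bigoplus_{a=1}^{m}X_a$ into pairwise non-isomorphic indecomposables, unique up to reordering, and that $X$ is presilting exactly when $\Hom(X_a,X_b[i])=\Ext^i(X_a,X_b)=0$ for all $i>0$ and all $1\le a,b\le m$ — the terms with $a=b$ saying that each $X_a$ is rigid, and the terms with $a\neq b$ that distinct summands have no positive-degree extensions in either direction. So it suffices to identify which indecomposable summands can occur and to translate the vanishing of all these positive extensions into the geometry of $S_\La$.

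For the first point I would use that the indecomposable objects of $K^{-,b}(\proj\La)\simeq\D^b(\mmod\La)$ are the (finite) string complexes $P^\bullet_\gamma$, $\gamma$ a slalom, together with the band complexes (see \cite{OPS}), and that a band complex is never rigid: it lies in a homogeneous tube, equivalently it carries a nonzero self-extension in positive degree, so it cannot appear as a summand of a presilting object. Hence every indecomposable summand of a basic presilting object has the form $P^\bullet_{\gamma_a}$ for a slalom $\gamma_a$, and the homotopy class of $\gamma_a$ is determined by the summand since $\gamma\mapsto\Sigma(\gamma)\mapsto P^\bullet_\gamma$ is a bijection onto the string objects; conversely each $P^\bullet_\gamma$ is an indecomposable object of $K^{-,b}(\proj\La)$.

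The second point is then exactly Proposition~\ref{posext}, applied to every ordered pair $(\gamma_a,\gamma_b)$ with $1\le a,b\le m$, including $a=b$ (where the two alternatives of the proposition coincide and the statement becomes rigidity of $P^\bullet_{\gamma_a}$): the slaloms $\gamma_1,\dots,\gamma_m$ are pairwise and self non-intersecting in the interior of $S$ if and only if $\Ext^i(P^\bullet_{\gamma_a},P^\bullet_{\gamma_b})=0$ for all $i>0$ and all $a,b$. Combining this with the reformulation above, $\bigoplus_{a=1}^{m}P^\bullet_{\gamma_a}$ is presilting precisely when $\{\gamma_1,\dots,\gamma_m\}$ is a mutually non-intersecting collection of slaloms. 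The assignment $\{\gamma_1,\dots,\gamma_m\}\mapsto\bigoplus_{a=1}^{m}P^\bullet_{\gamma_a}$ therefore sends mutually non-intersecting collections of pairwise non-homotopic slaloms to basic presilting objects (basic, since the $P^\bullet_{\gamma_a}$ are then pairwise non-isomorphic), and by Krull--Schmidt together with the previous paragraph every basic presilting object arises this way from a uniquely determined such collection (the slaloms of its indecomposable summands); so the assignment is the claimed bijection.

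I do not expect a serious obstacle here, as the corollary is essentially a repackaging of Proposition~\ref{posext} through the Krull--Schmidt decomposition. The one point that needs care is the non-rigidity of band complexes invoked above: one must quote, or briefly reprove, that a band complex over a gentle algebra has $\Hom(B,B[i])\neq 0$ for some $i>0$, so that it is excluded as a presilting summand. A minor bookkeeping remark is that ``mutually non-intersecting'' is to be read with respect to minimal, transversal homotopy representatives, which is precisely the convention under which Theorem~\ref{main} and Proposition~\ref{posext} were established, so no reconciliation of conventions is required.
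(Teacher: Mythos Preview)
Your proof is correct and follows essentially the same route as the paper: decompose a basic presilting object into indecomposables, argue that each summand must be a string object $P^\bullet_{\gamma}$, and then invoke Proposition~\ref{posext} for the equivalence between vanishing of positive extensions and mutual non-intersection. The only cosmetic difference is that where you argue directly that band complexes are non-rigid (lying in homogeneous tubes), the paper simply cites \cite[Lemma~3.4]{APS} for the fact that presilting summands are string objects; your version is slightly more self-contained but amounts to the same step.
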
 
\begin{proof}
    Suppose $T=\oplus_{i=1}^mT_i$ is a basic presilting object in $K^{-,b}(\proj\La)$ with $T_i$ indecomposable. Then, using \cite[Lemma~3.4]{APS}, we get that $T_i$ is of the form $P^{\bullet}_{\gamma_i}$ for some slalom $\gamma_i$. Since $T$ is presilting, we get that $\Ext^j(T_{i_1},T_{i_2})= 0$ for all~$j>0$ and for all $1\leq i_1, i_2\leq m $. Using the previous proposition, this gives that~$\{\gamma_1,\cdots,\gamma_m\}$ is a mutually non-intersecting collection of slaloms in $S_\La$.
    
    Conversely, given a mutually non-intersecting collection of slaloms $\{\gamma_1,\cdots,\gamma_m\}$ in $S_\La$, the previous proposition immediately implies that $T=\oplus_{i=1}^mP^{\bullet}_{\gamma_i}$ is a basic presilting object in $K^{-,b}(\proj\La)$. 
\end{proof}

Using \cite[Proposition~5.7]{APS}, we know that for a gentle algebra $\La$, a basic presilting object $X$ is silting if and only if it has $n=|\La|$ many indecomposable summands. This gives us the following corollary.

\begin{corollary}
    There is a bijection between basic silting objects in $K^{-,b}(\proj\La)$ and mutually non-intersecting collections of $n$ slaloms in $S_\La$.
\end{corollary}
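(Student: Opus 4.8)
The plan is to deduce this directly from the preceding corollary together with the cardinality criterion for silting objects. Recall that the previous corollary establishes a bijection sending a basic presilting object $T=\bigoplus_{i=1}^m P^\bullet_{\gamma_i}$, with each $T_i=P^\bullet_{\gamma_i}$ indecomposable, to the mutually non-intersecting collection $\{\gamma_1,\dots,\gamma_m\}$ of slaloms. The first step is to observe that under this bijection the number of indecomposable direct summands of $T$ coincides with the cardinality of the associated collection of slaloms. Indeed, since $\Sigma$ (equivalently $\gamma\mapsto P^\bullet_\gamma$) is a bijection between homotopy classes of slaloms and the relevant indecomposable objects of $K^{-,b}(\proj\La)$, two summands $P^\bullet_{\gamma_i}$ and $P^\bullet_{\gamma_j}$ are isomorphic exactly when $\gamma_i$ and $\gamma_j$ are homotopic; hence a basic (multiplicity-free) presilting object corresponds precisely to a collection of pairwise distinct slaloms, and conversely.

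Next I would invoke \cite[Proposition~5.7]{APS}, which asserts that for a gentle algebra $\La$ a basic presilting object in $K^{-,b}(\proj\La)$ is silting if and only if it has $n=|\La|$ indecomposable summands. Combining this with the previous step, the bijection of the preceding corollary restricts to a bijection between basic silting objects in $K^{-,b}(\proj\La)$ and mutually non-intersecting collections of exactly $n$ slaloms in $S_\La$, which is the assertion.

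I do not expect a genuine obstacle here, since the statement is essentially a formal consequence of results already established; the only point requiring (minor) care is the bookkeeping that "basic" on the algebra side matches "pairwise distinct arcs" on the surface side, so that counting indecomposable summands is literally the same as counting slaloms in the corresponding collection. Everything else follows immediately from the previous corollary and the cited proposition of \cite{APS}.
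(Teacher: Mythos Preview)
Your proposal is correct and follows exactly the approach the paper takes: the paper simply cites \cite[Proposition~5.7]{APS} to conclude that a basic presilting object is silting if and only if it has $n$ indecomposable summands, and then reads the result off the preceding corollary. Your additional remark matching ``basic'' with ``pairwise distinct slaloms'' is a harmless elaboration of what the paper leaves implicit.
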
 

We can also restrict the above model to obtain a characterization of $d$-term presilting objects of $\La$. To do this, we keep only the $\color{blue}\times$-points labelled from $0$ to~$-d+1$ between each pair of neighbouring $\rc$-points. A slalom in the restricted model is a slalom $\gamma$ in the original model for which $f_{\gamma}(\gamma\cap\Delta^*)\subseteq [-d+1,0]$, as this ensures that $P^{\bullet}_\gamma$ is concentrated in $[-d+1,0]$. We will denote this restricted model by $S_\La^d$. It is easy to see that Theorem \ref{main} and Proposition \ref{posext} still hold in this restricted model, which gives us the following corollary.

\begin{corollary}\label{dsilt}
    There is a bijection between basic $d$-term silting objects of $\La$ and mutually non-intersecting collections of $n$ slaloms in $S^d_\La$.
\end{corollary}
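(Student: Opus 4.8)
The plan is to read this off from the characterization of basic presilting objects of $K^{-,b}(\proj\La)$ established above, once we identify which string objects lie in the truncated category $K^{[-d+1,0]}(\proj\La)$.

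First I would fix the meaning of the left-hand side: a \emph{$d$-term silting object of $\La$} is a basic silting object of $K^b(\proj\La)$ all of whose indecomposable summands lie in $K^{[-d+1,0]}(\proj\La)$, i.e.\ are complexes of projectives concentrated in cohomological degrees $-d+1,\dots,0$. Since $K^b(\proj\La)$ is a thick subcategory of $K^{-,b}(\proj\La)$, such an object is in particular a basic presilting object of $K^{-,b}(\proj\La)$; conversely, by \cite[Proposition~5.7]{APS} (recalled before the statement) a basic presilting object with the maximal number $n=|\La|$ of indecomposable summands is automatically silting, so the $d$-term silting objects of $\La$ are exactly the basic presilting objects of $K^{-,b}(\proj\La)$ that are concentrated in degrees $[-d+1,0]$ and have $n$ indecomposable summands. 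By \cite[Lemma~3.4]{APS} every indecomposable presilting object of $K^{-,b}(\proj\La)$ is a string object $P^\bullet_\gamma$ (band objects cannot occur, having nonzero self-extensions), so such an object is a direct sum $\bigoplus_{i=1}^n P^\bullet_{\gamma_i}$ of string objects each concentrated in $[-d+1,0]$.

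The key step is to show that an indecomposable string object $P^\bullet_\gamma$ is concentrated in degrees $[-d+1,0]$ if and only if $\gamma$ is a slalom of the restricted model $S^d_\La$, that is, $f_\gamma(\gamma\cap\Delta^*)\subseteq[-d+1,0]$. One direction is already recorded in the paragraph preceding the statement; for the converse I would invoke the explicit dictionary of \cite{OPS} (compatibly with \cite{APS}), under which the cohomological degrees carrying the indecomposable projective summands of $P^\bullet_\gamma$ are precisely the values taken by the grading function $f_\gamma$ on the points of $\gamma\cap\Delta^*$. Hence the cohomological support of $P^\bullet_\gamma$ equals $f_\gamma(\gamma\cap\Delta^*)$, and the two conditions agree. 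In particular $\Sigma$, composed with the \cite{OPS} correspondence, restricts to a bijection between homotopy classes of slaloms of $S^d_\La$ and isomorphism classes of indecomposable objects of $K^{[-d+1,0]}(\proj\La)$.

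Finally I would assemble the bijection. Given a mutually non-intersecting collection $\gamma_1,\dots,\gamma_n$ of (pairwise distinct) slaloms of $S^d_\La$, the object $T=\bigoplus_{i=1}^n P^\bullet_{\gamma_i}$ is basic, and it is presilting because Proposition \ref{posext} holds verbatim in $S^d_\La$; since it has $n$ summands it is silting by \cite[Proposition~5.7]{APS}, and since each summand lies in $K^{[-d+1,0]}(\proj\La)$ it is a $d$-term silting object. Conversely, by the reduction above any $d$-term silting object has the form $\bigoplus_{i=1}^n P^\bullet_{\gamma_i}$ with each $\gamma_i$ a slalom of $S^d_\La$, and these slaloms are mutually non-intersecting by Proposition \ref{posext}. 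That the two assignments are mutually inverse follows from the bijectivity of $\Sigma$ noted in the previous paragraph. I expect the main obstacle to be precisely that key step — pinning down, via the grading conventions of \cite{OPS, APS}, that the cohomological support of $P^\bullet_\gamma$ is exactly the image of $f_\gamma$ on $\gamma\cap\Delta^*$, so that the restriction "$\gamma$ lies in $S^d_\La$" matches the truncation $K^{[-d+1,0]}(\proj\La)$ on the nose; the comparison between silting in $K^b(\proj\La)$ and presilting in $K^{-,b}(\proj\La)$ is a secondary, more routine point covered by \cite[Proposition~5.7]{APS}.
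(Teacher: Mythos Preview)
Your proposal is correct and follows essentially the same approach as the paper: the paper likewise derives this corollary directly from the previous two corollaries together with \cite[Lemma~3.4]{APS} and \cite[Proposition~5.7]{APS}, after observing that slaloms of $S^d_\La$ correspond exactly to string objects concentrated in $[-d+1,0]$ and that Theorem~\ref{main} and Proposition~\ref{posext} carry over to the restricted model. You have simply spelled out in more detail the ``key step'' (cohomological support of $P^\bullet_\gamma$ equals $f_\gamma(\gamma\cap\Delta^*)$) that the paper asserts in one clause.
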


\begin{remark}
Note that in the full model $S_\La$, for every finite arc starting at a $\color{blue}\times$-point, there exists a unique choice for the other endpoint which makes it a slalom. However, this is not the case in the restricted model $S^d_\La$, because of the additional condition that ${\gamma}(\gamma\cap\Delta^*)\subseteq [-d+1,0]$.
\end{remark}
\begin{remark}\label{d=2}
    In \cite{PPP19}, the authors introduced a model for basic support $\tau$-tilting modules by considering blossom points on the boundary of a marked surface and slaloms connecting these points. Viewing these blossom points as $\color{blue}\times$-points, one precisely recovers the model $S^2_\La$ described above, which is consistent with the fact that basic support $\tau$-tilting modules are in bijection with basic $2$-term silting complexes \cite{AIR}. 
\end{remark}

\section{Counting $d$-term silting objects in linearly oriented $A_n$}\label{count}
Let $kA_n$ be the path algebra of the linearly oriented quiver of type $A_n$, that is,~$kA_n=k(1\to2\to\cdots\to n)$. In this section, we will give a recursive formula for the number of $d$-term silting objects in $kA_n$ using the above corollary. This will recover the result of \cite{STW} that these are counted by the Pfaff-Fuss-Catalan numbers. 

The marked surface $(S,M,P)$ obtained from Theorem \ref{MStoGA} for $\La=kA_n$ is shown in Figure \ref{S_n_green}. By replacing each $\color{teal}\circ$-point in this figure with $d$ $\color{blue}\times$-points, we obtain the surface $S^d_{kA_n}$ defined above (Fig \ref{S^d_n}). 

\begin{figure}[!h]
        \centering
        \begin{subfigure}{.5\textwidth}
        \centering
        \includegraphics[scale=0.22]{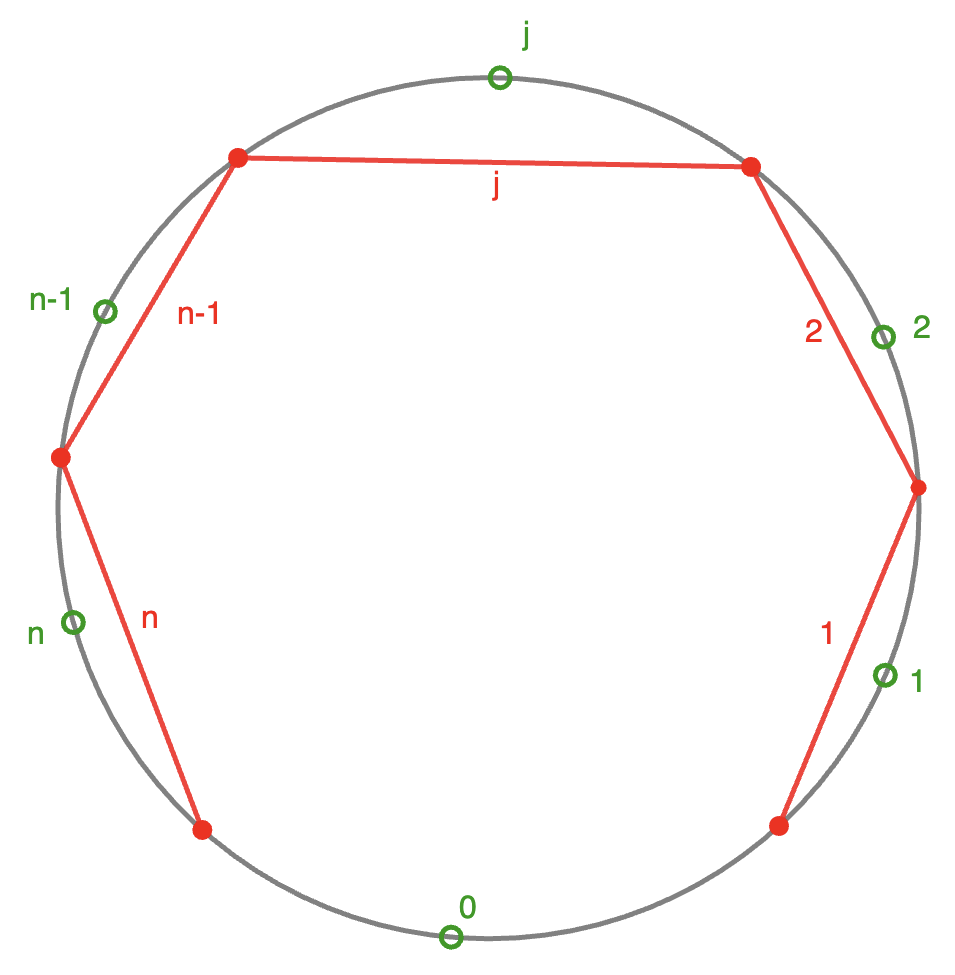}
        \caption{}
        \label{S_n_green}
        \end{subfigure}%
        \begin{subfigure}{.5\textwidth}
        \centering
        \includegraphics[scale=0.26]{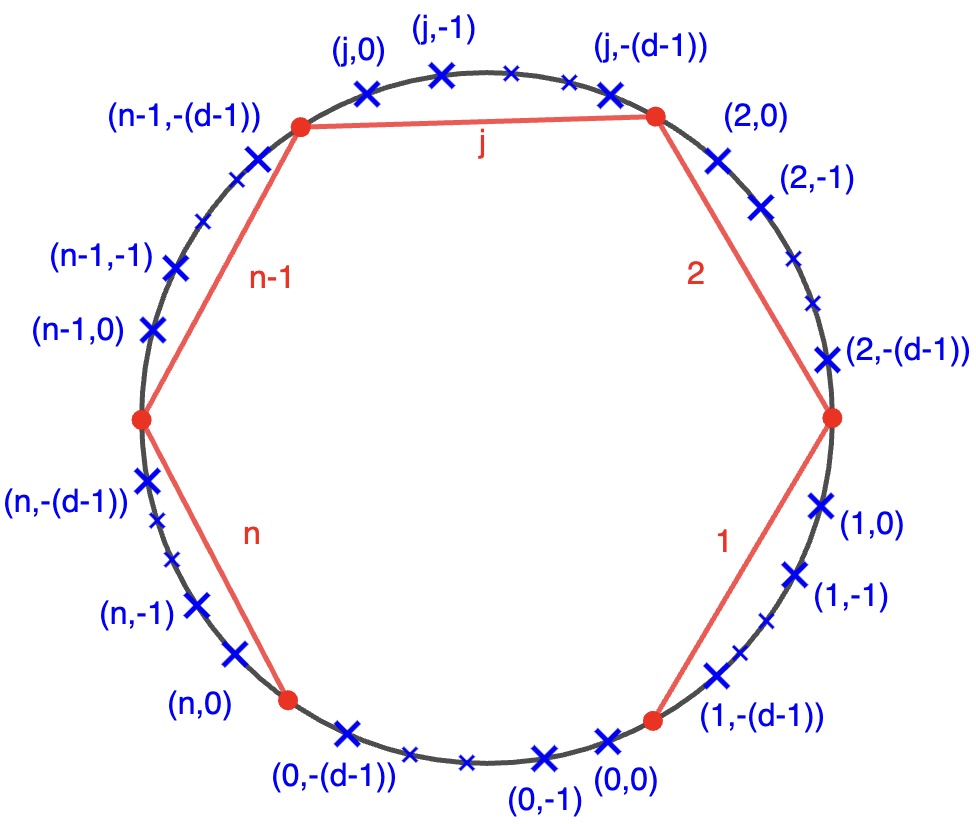}
        \caption{}
        \label{S^d_n}
        \end{subfigure}
        \caption{}
\end{figure}

Let $B^d_n$ denote the number of basic silting objects in $kA_n$. Using Corollary~\ref{dsilt}, we get that $B^d_n$ is also the number of collections of $n$ mutually non-intersecting slaloms in Figure \ref{S^d_n}. To calculate $B^d_n$, we first count the number of such collections containing some fixed slalom $\gamma$. This is done by cutting the disc along this slalom, and relabelling (one of) the parts thus obtained to get $S^d_{kA_m}$ for some $m<n$. This allows us to build a recursive formula for $B^d_n$. We explain the detailed process below.

Let $\Gamma$ be a collection of $n$ mutually non-intersecting slaloms. Such a collection will be maximal with respect to the property of mutual non-intersection as the number of indecomposable summands of any presilting object is less than or equal to $n$ \cite{XY}. Our first claim is that in such a collection of $n$ slaloms, at least one of~$(0,0), (0,-1), (0,-2), \cdots (0,-d+1)$ has to be an endpoint of some slalom. This is because otherwise, the slalom in Figure \ref{step1} will contradict the maximality of the collection. 

\begin{figure}
        \centering
        \begin{subfigure}{.5\textwidth}
        \centering
        \includegraphics[scale=0.25]{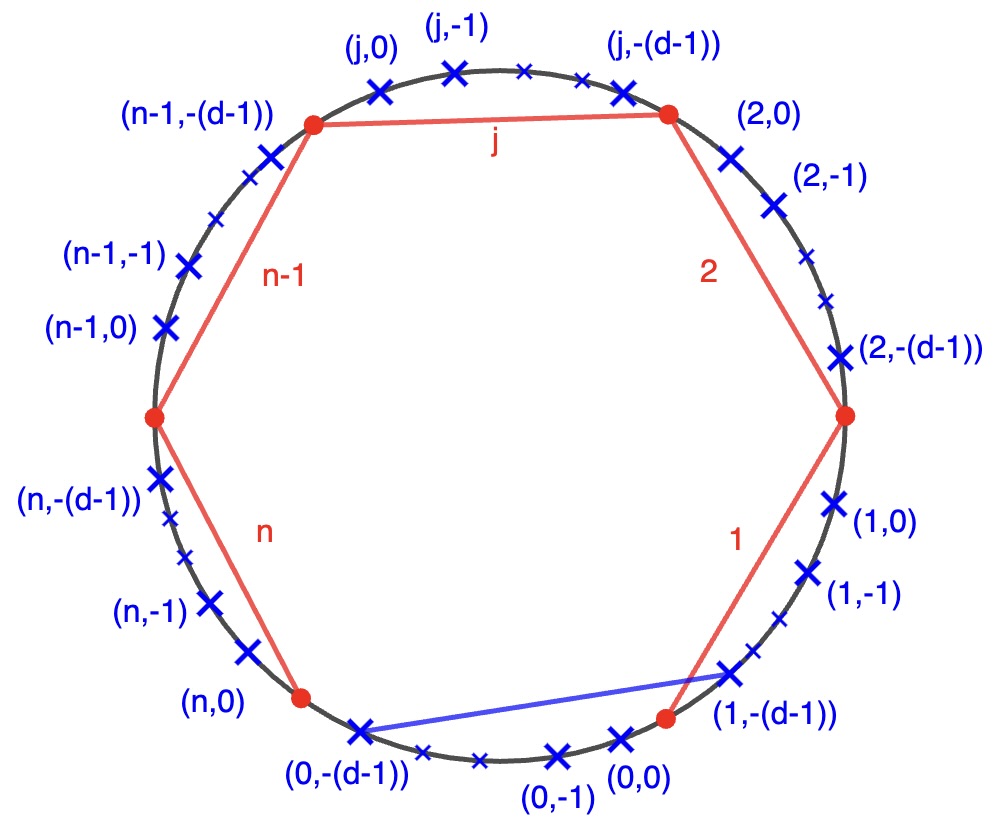}
        \caption{}
        \label{step1}
        \end{subfigure}%
        \begin{subfigure}{.5\textwidth}
        \centering
        \includegraphics[scale=0.25]{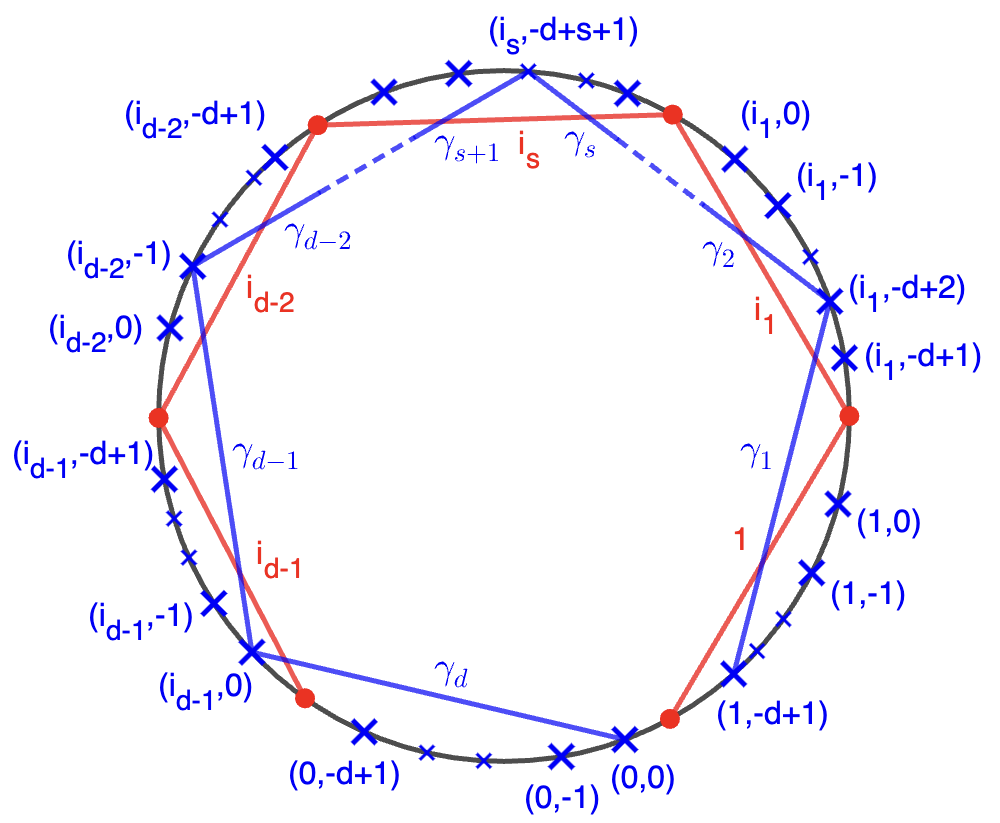}
        \caption{}
        \label{step2}
        \end{subfigure}
        \caption{}
\end{figure}

\begin{lemma}\label{lemrelabel1}
    Let $\gamma$ be a slalom in $S^d_n$ as shown in Figure \ref{relabel1}. Then a $\color{blue}\times$-arc lying in the disc $D'$ is a slalom in $S^d_n$ if and only if it is a slalom in $S^d_{i_2-i_1-1}$ obtained by relabelling the points of $D'$ as shown in Figure \ref{relabel2}.
\end{lemma}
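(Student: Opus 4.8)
The statement is purely combinatorial --- it asks which $\color{blue}\times$-arcs contained in the disc $D'$ are slaloms --- so the plan is to transport the data defining a slalom across the cut. Set $m:=i_2-i_1-1$. The first task is to make the identification in the lemma precise: cutting the disc underlying $S^d_n$ along the slalom $\gamma$ yields two closed discs glued along two copies of $\gamma$, and I take $D'$ to be the one depicted in Figure~\ref{relabel1}. Reading off the explicit model of $S^d_n$ for linearly oriented $A_n$ from Figures~\ref{S_n_green}--\ref{S^d_n}, together with the shape of $\gamma$ in it, I would verify that $D'$ --- equipped with the marked points and punctures (there are none) it inherits, the arcs of $\Delta^*$ lying inside it, and the relabelling of $\color{blue}\times$-points prescribed in Figure~\ref{relabel2} --- is exactly the restricted model $S^d_m$: the $m$ columns of $\color{blue}\times$-points cut off by $\gamma$ become the columns of the smaller model, and $\Delta^*\cap D'$ becomes its dual dissection. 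I would also record the key consequence that a $\color{blue}\times$-arc $\delta\subseteq D'$ can only meet arcs of $\Delta^*$ that lie in $D'$, so that its intersection set $\delta\cap\Delta^*$, the induced order on it, the polygons of $\Delta^*$ entered and exited by $\delta$, and the side (left or right of $\delta$) on which the $\color{blue}\times$-points of each such polygon sit are the same whether $\delta$ is read in $S^d_n$ or in $S^d_m$.

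Granting this, the rest is almost formal. The function $f_\delta$ is determined by precisely the data just listed together with the single initial value $f_\delta(p_0)=\epsilon(s(\delta))$, and the recursive step $f(q)=f(p)\pm 1$ depends only on the (identical) local geometry; hence $f_\delta$ computed in $S^d_n$ and $f_\delta$ computed in $S^d_m$ differ by the constant $c:=\epsilon_{S^d_m}(s(\delta))-\epsilon_{S^d_n}(s(\delta))$. Inspecting the relabelling of Figure~\ref{relabel2} --- which only renames the columns of $D'$ and completes, by means of the feet of $\gamma$, any column that $\gamma$ cuts, leaving the index $\epsilon$ of every inherited $\color{blue}\times$-point untouched --- one gets $c=0$, so $f_\delta$ is literally unchanged by the identification. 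Consequently the two conditions defining a slalom of the restricted model, namely $f_\delta(q_0)=\epsilon(t(\delta))$ and $f_\delta(\delta\cap\Delta^*)\subseteq[-d+1,0]$, hold for $\delta$ in $S^d_n$ if and only if they hold for $\delta$ in $S^d_m$; that $\delta$ is moreover non-contractible in $D'$ exactly when it is in $S^d_n$ is routine, since both surfaces are discs. This yields the asserted equivalence.

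The real content, and the only delicate point, is the geometric bookkeeping of the first step. One must check that arcs of $\Delta^*$ which $\gamma$ happens to cross do not interfere --- they can be isotoped so that their intersection pattern with every $\delta\subseteq D'$ is unchanged --- and that the columns of $\color{blue}\times$-points, in particular one through which a foot of $\gamma$ passes, are reassembled and relabelled, as in Figure~\ref{relabel2}, into honest $[-d+1,0]$-indexed columns forming the model $S^d_m$. This is a finite, explicit verification against the structure of $S^d_n$ for linearly oriented $A_n$; everything downstream of it is the ``$f_\delta$ depends only on local data, hence transports across the cut'' argument above.
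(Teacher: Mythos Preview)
Your plan is more structural than the paper's: rather than enumerating the possible slaloms in $D'$ and matching them one by one with slaloms of $S^d_m$, you want to show that the grading function $f_\delta$ itself transports across the identification, so that both slalom conditions follow at once. The paper, by contrast, argues case by case --- slaloms with an endpoint in column $i_1$, slaloms with an endpoint in column $i_2$, and slaloms strictly between --- and writes down the bijection explicitly in each case.

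The difficulty is that the two concrete claims you make about the relabelling are not correct, and this is exactly where the content of the lemma sits. First, $\Delta^*\cap D'$ is \emph{not} the dual dissection of $S^d_m$: the slalom $\gamma$ crosses the $\rc$-arcs numbered $i_1$ and $i_2$, so pieces of those two arcs lie inside $D'$, and the relabelling of Figure~\ref{relabel2} \emph{removes} them (the paper's proof says so explicitly). Your proposed fix, that these pieces ``can be isotoped so that their intersection pattern with every $\delta\subseteq D'$ is unchanged,'' cannot work: any $\delta\subseteq D'$ with an endpoint in the partial column $i_1$ is forced, for topological reasons, to cross the remnant of arc $i_1$, and that crossing contributes a $\pm 1$ step to $f_\delta$ in $S^d_n$ with no counterpart in $S^d_m$. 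Second, and correspondingly, the relabelling does \emph{not} leave $\epsilon$ untouched on the boundary columns: the points $(i_1,m)$ are sent to $(0,m{+}1)$ and the points $(i_2,m)$ to $(0,m{-}1)$, a shift of $\pm 1$ that exactly cancels the missing crossing. Thus $f_\delta$ and $\epsilon$ each change on the boundary columns, and the slalom condition survives because the two changes compensate; your argument asserts that both quantities are separately invariant when in fact each varies and they cancel. Supplying that compensation is precisely what the paper's short case analysis does, and it is what your ``finite, explicit verification'' would have to establish --- but not with the outcome you predicted.
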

\begin{figure}
        \centering
        \begin{subfigure}{.5\textwidth}
        \centering
        \includegraphics[scale=0.15]{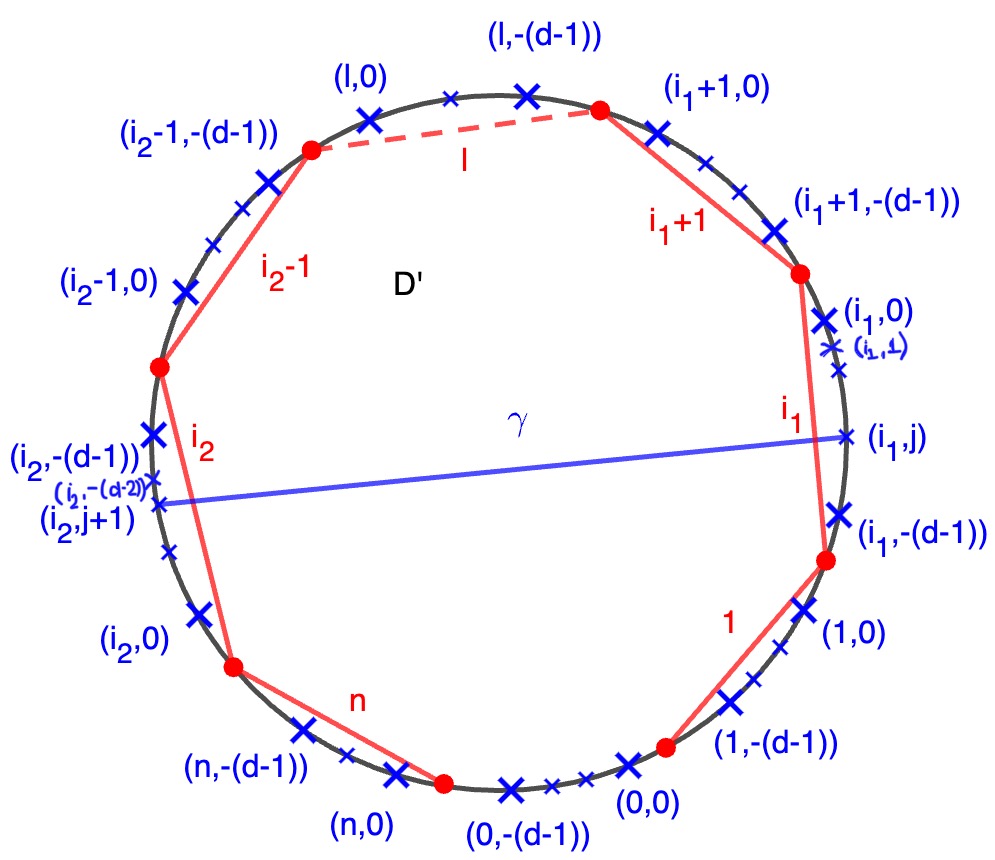}
        \caption{}
        \label{relabel1}
        \end{subfigure}%
        \begin{subfigure}{.5\textwidth}
        \centering
        \includegraphics[scale=0.15]{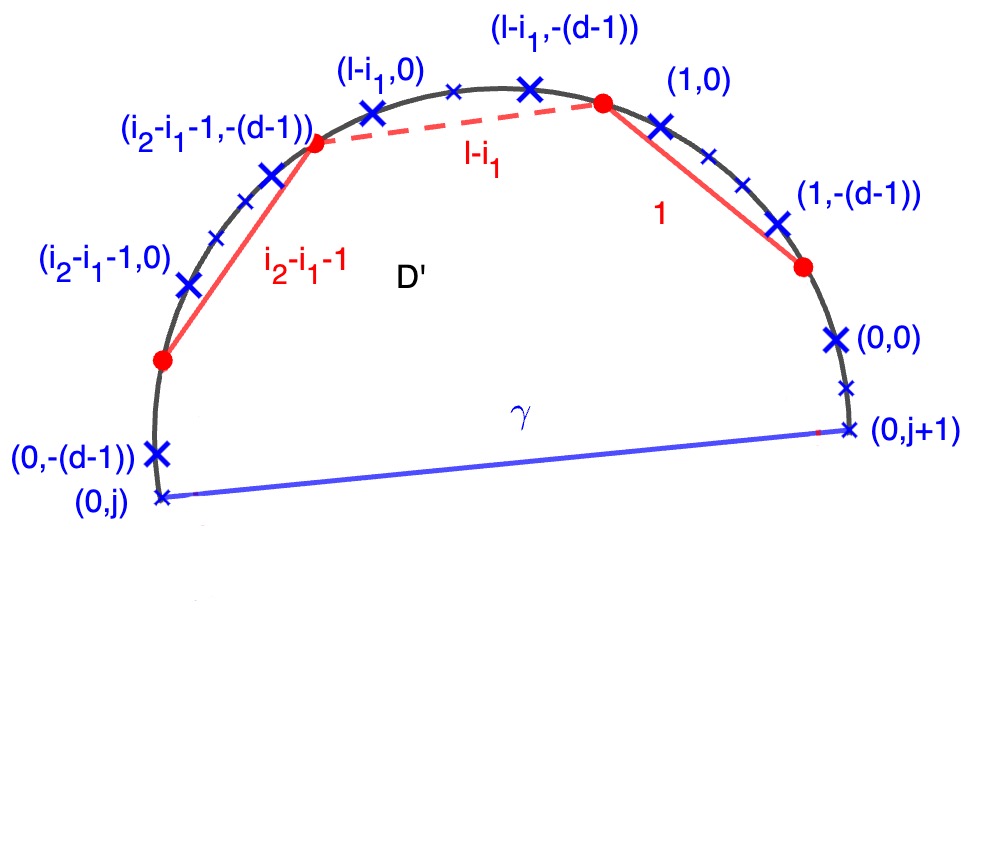}
        \caption{}
        \label{relabel2}
        \end{subfigure}
        \caption{}
\end{figure}
\begin{proof}
    Since there are no slaloms in $S^d_n$ starting at $(i_1,0)$ and lying in $D'$, we can remove it. Moreover, since the only slaloms in $D'$ starting at $(i_1,m)$ for some~$j\leq~m\leq -1$ end at $(l, m+1)$ for some~$i_1+1\leq l\leq i_2-1$, removing the $\rc$-arc $i_1$ and relabelling as in Fig \ref{relabel2} gives a bijection between the set of slaloms starting at these points. Similarly, since the only slaloms in $D'$ ending at~$(i_2,m)$ for some $-d+1\leq m \leq j+1$ start at $(l,m-1)$ for some $i_1+1\leq l\leq i_2-1$, removing the $\rc$-arc $i_2$, the point $(i_2,-d+1)$, and relabelling as in Fig \ref{relabel2} gives a bijection between the set of slaloms ending at these points. Combining this with the map that sends the slalom connecting $(s_1,t)$ to $(s_2,t+1)$ for some $i_1<s_1<s_2<i_2$ in~$D'$ to the slalom connecting $(s_1-i_1,t)$ to $(s_2-i_1,t+1)$ in the relabelled figure, we get the required bijection. 
\end{proof}
\begin{lemma}\label{lemrelabel2}
    Let $\gamma$ be a slalom in $S^d_n$ as shown in Figure \ref{relabel1.1}. Then a $\color{blue}\times$-arc lying in the disc $D'$ is a slalom in $S^d_n$ if and only if it is a slalom in $S^d_{n-i}$ obtained by relabelling the points of $D'$ as shown in Figure \ref{relabel1.2}.
\end{lemma}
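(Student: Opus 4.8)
The plan is to mirror the structure of Lemma \ref{lemrelabel1}, but now cutting along a slalom one of whose endpoints lies at height $0$ on the boundary component adjacent to the ``free'' end of the $A_n$-diagram, so that the relevant disc $D'$ is the one not containing the arc $i_2$ (or rather, containing the outermost $\rc$-point). Concretely, I would first fix the configuration: $\gamma$ is a slalom in $S^d_n$ as in Figure \ref{relabel1.1}, with endpoints, say, at $(i,0)$ and $(n,t)$ for the appropriate $t$, and $D'$ is the sub-disc cut off by $\gamma$ that inherits the $\color{blue}\times$-points indexed by arcs $i+1,\dots,n$ together with the fragments of arcs $i$ and $n$ lying inside it. The goal is a bijection between $\color{blue}\times$-arcs in $D'$ that are slaloms in $S^d_n$ and $\color{blue}\times$-arcs that are slaloms in $S^d_{n-i}$ after the relabelling of Figure \ref{relabel1.2}.

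The key steps, in order: (1) Identify which $\color{blue}\times$-points of $S^d_n$ survive in $D'$ and show that no slalom in $S^d_n$ lying in $D'$ can use the points that must be discarded — namely the point $(i,0)$ (no slalom starts there and stays in $D'$, since $\gamma$ itself blocks the region) and the point $(n,-d+1)$ or its analogue at the cut (no slalom in $D'$ can end there), exactly as in the proof of Lemma \ref{lemrelabel1}. (2) Using the grading function $f_\delta$ from the definition of slalom, check that for a $\color{blue}\times$-arc $\delta$ contained in $D'$, the values of $f_\delta$ on $\delta\cap\Delta^*$ are computed identically whether we view $\delta$ inside $S^d_n$ or inside the relabelled $S^d_{n-i}$: the polygons of $\Delta^*$ that $\delta$ crosses, and the side on which their $\color{blue}\times$-points lie, are unchanged by the cut, so the ``$\pm1$'' recursion defining $f_\delta$ is unaffected; only the base value $\epsilon(s(\delta))$ and the terminal comparison with $\epsilon(t(\delta))$ need to match up, and they do because the relabelling of Figure \ref{relabel1.2} shifts every $\epsilon$-label by the same constant $i$ (arc $l\mapsto l-i$), preserving all differences. (3) Assemble the bijection explicitly, as in Lemma \ref{lemrelabel1}: slaloms of $D'$ starting at a surviving point of arc $i$ correspond after relabelling to slaloms starting at arc $0$; slaloms ending at a surviving point of arc $n$ correspond to slaloms ending at arc $n-i$; and the ``interior'' slaloms connecting $(s_1,t)$ to $(s_2,t+1)$ with $i<s_1<s_2\le n$ go to $(s_1-i,t)\mapsto(s_2-i,t+1)$. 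Conclude that the slalom condition $f(q_0)=\epsilon(t(\cdot))$ transfers in both directions.

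I expect the main obstacle to be bookkeeping at the two ``boundary'' arcs $i$ and $n$ where the cut happens: one must be careful that cutting along $\gamma$ does not create a spurious slalom (an arc that fails the endpoint condition in $S^d_n$ but accidentally satisfies it after relabelling) or destroy a genuine one, and this requires checking that the heights available at the cut arcs in the relabelled picture are precisely $[-d+1,0]$ shifted correctly, i.e.\ that the restriction $f_\delta(\delta\cap\Delta^*)\subseteq[-d+1,0]$ is equivalent on the two sides. This is the same subtlety handled in Lemma \ref{lemrelabel1} for the disc containing $i_1$, so the argument there adapts; the only genuinely new point is that here $D'$ is on the ``other side'' and abuts the outermost boundary structure of the $A_n$ model, so I would double-check the edge case where $\gamma$ has an endpoint at the extreme point $(n,0)$ or where $D'$ degenerates, confirming the formula $S^d_{n-i}$ still reads off correctly (including $n-i$ possibly small).
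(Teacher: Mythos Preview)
Your overall strategy---mirror Lemma \ref{lemrelabel1} by discarding unusable boundary points and exhibiting an explicit case-by-case bijection---is exactly what the paper does. The gap is that you have guessed the wrong configuration for $\gamma$ and $D'$, and this causes you to miss the one genuinely new case.

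In Figure \ref{relabel1.1} the slalom $\gamma$ runs from $(i,j)$ to the column-$0$ boundary (at the same height $j$), not to column $n$. Consequently $D'$ contains, in addition to columns $i+1,\dots,n$, the column-$0$ points $(0,t)$ for $-d+1\le t\le j$. The paper's bijection therefore has a case you do not list: slaloms in $D'$ connecting $(s,t)$ to $(0,t)$ with $i<s\le n$ and $-d+1\le t\le j$, which are sent to slaloms connecting $(s-i,t)$ to $(0,t)$ in $S^d_{n-i}$. Note that these slaloms have the \emph{same} height at both endpoints, unlike every slalom appearing in Lemma \ref{lemrelabel1}; this is the essential new feature of Lemma \ref{lemrelabel2}, and your outline does not cover it. Conversely, your proposed treatment of ``surviving points of arc $n$'' and the removal of $(n,-d+1)$ are spurious: nothing special happens at column $n$ in the paper's proof.

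A smaller point: your claim that the relabelling ``shifts every $\epsilon$-label by the same constant $i$'' conflates the column index (first coordinate, which is shifted by $i$) with the height $\epsilon$ (second coordinate, which is \emph{not} uniformly shifted---at the cut column $i$ the surviving heights are reindexed after removing the $\rc$-arc $i$, just as in Lemma \ref{lemrelabel1}). Once you correct the picture of $D'$ and add the column-$0$ case, the rest of your plan goes through exactly as in the paper.
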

\begin{figure}
        \centering
        \begin{subfigure}{.5\textwidth}
        \centering
        \includegraphics[scale=0.15]{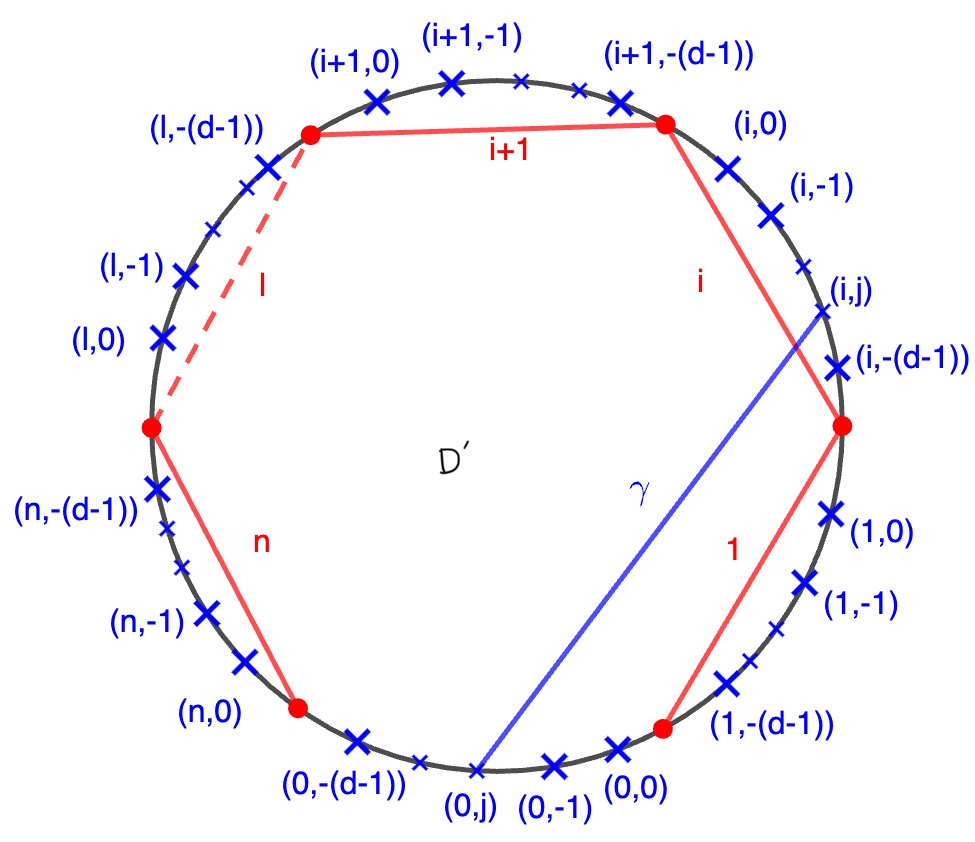}
        \caption{}
        \label{relabel1.1}
        \end{subfigure}%
        \begin{subfigure}{.5\textwidth}
        \centering
        \includegraphics[scale=0.15]{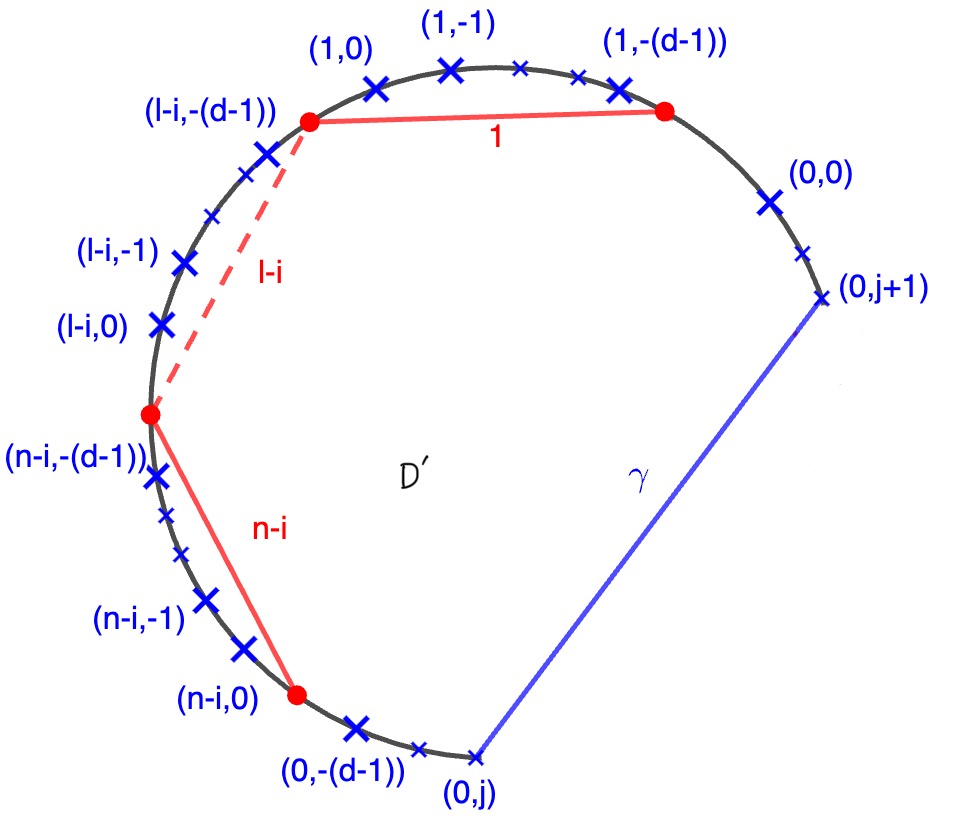}
        \caption{}
        \label{relabel1.2}
        \end{subfigure}
        \caption{}
\end{figure}
\begin{proof}
    Since there are no slaloms in $S^d_n$ starting at $(i,0)$ and lying in~$D'$ (other than possibly $\gamma$ when $j=0$), we can remove it. Moreover, since the only slaloms in $D'$ starting at $(i,m)$ for some $j\leq m \leq -1$ end at $(l, m+1)$ for some $i+1\leq l\leq n$, removing the $\rc$-arc $i$ and relabelling as in Fig \ref{relabel1.2} gives a bijection between the set of slaloms starting at these points. Combining this with the map that sends the slalom connecting $(s_1,t)$ to $(s_2,t+1)$ for some $i<s_1<s_2\leq n$ in $D'$ to the slalom connecting $(s_1-i,t)$ to $(s_2-i,t+1)$ in the relabelled figure, and the slalom connecting $(s,t)$ to $(0,t)$, for some $i<s_1\leq n$ and $-d+1\leq t\leq j$, in $D'$ to the slalom connecting $(s-i,t)$ to $(0,t)$ in the relabelled figure, we get the required bijection. 
\end{proof}
\begin{corollary}
    The number of mutually non-intersecting collections of $i_2-i_1-1$ slaloms in $S^d_n$ lying in $D'$ is $B^d_{i_2-i_1-1}$.
\end{corollary}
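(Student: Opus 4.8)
The plan is to upgrade the bijection of Lemma~\ref{lemrelabel1} from a bijection of \emph{slaloms} to a bijection of \emph{mutually non-intersecting collections} of slaloms, and then to recognise the resulting collections in the relabelled surface as being counted by $B^d_{i_2-i_1-1}$ via Corollary~\ref{dsilt}.

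First I would record that the bijection $\Phi$ constructed in the proof of Lemma~\ref{lemrelabel1}, from the set of $\color{blue}\times$-arcs lying in $D'$ that are slaloms in $S^d_n$ to the set of slaloms in the relabelled surface $S^d_{i_2-i_1-1}$, is realised by a homeomorphism of $D'$ onto the relabelled disc. Indeed, the construction only deletes the $\color{red}\bullet$-arcs $i_1$ and $i_2$, deletes the $\color{blue}\times$-point $(i_2,-d+1)$, and translates the remaining $\color{blue}\times$-points, which on arcs amounts to the index shift $(s,t)\mapsto(s-i_1,t)$ used there; none of this alters the topology of the interior of $D'$. Consequently $\Phi$ preserves interior intersections: two $\color{blue}\times$-arcs lying in $D'$ meet in the interior of $S$ if and only if their $\Phi$-images meet in the interior of $S^d_{i_2-i_1-1}$.

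Applying $\Phi$ componentwise then yields, for every $k \geq 0$, a bijection between mutually non-intersecting collections of $k$ slaloms in $S^d_n$ lying in $D'$ and mutually non-intersecting collections of $k$ slaloms in $S^d_{i_2-i_1-1}$. Taking $k = i_2-i_1-1$, which equals the rank of the linearly oriented quiver $A_{i_2-i_1-1}$, Corollary~\ref{dsilt} identifies these collections with the basic $d$-term silting objects of $kA_{i_2-i_1-1}$, and by definition there are exactly $B^d_{i_2-i_1-1}$ of these, giving the claim.

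The one step that needs genuine care --- the main obstacle --- is the first: Lemma~\ref{lemrelabel1} is phrased as a set-theoretic bijection, so one must check that its relabelling does not secretly reroute arcs past one another, i.e. that it is induced by an ambient homeomorphism. Once this is in hand, preservation of interior intersections, and hence of the mutual non-intersection condition on collections, is automatic, and the count drops out of Corollary~\ref{dsilt} with no further work.
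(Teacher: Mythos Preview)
Your proposal is correct and follows essentially the same route as the paper's own proof: use the bijection of Lemma~\ref{lemrelabel1} to transport collections to $S^d_{i_2-i_1-1}$, observe that mutual non-intersection is preserved, and then invoke Corollary~\ref{dsilt}. The paper compresses your homeomorphism argument into the phrase ``it is easy to see'' that the bijection preserves mutual non-intersection; your explicit justification that the relabelling is realised by an ambient homeomorphism of the disc (so that interior intersections are preserved automatically) is exactly the content the paper is suppressing.
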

\begin{proof}
    It is easy to see that a collection of $i_2-i_1-1$ slaloms in $S^d_n$ lying in $D'$ is mutually non-intersecting if and only if it is mutually non-intersecting in $S^d_{i_2-i_1-1}$ under the above bijection. Using Corollary \ref{dsilt}, the number of such collections is equal to $B^d_{i_2-i_1-1}$.
\end{proof}

We can now divide the problem into two cases. 
\begin{enumerate}
    \item $(0,0)$ is the endpoint of a slalom: Let $\gamma_d$ be the rightmost such slalom. Since this is the rightmost slalom connected to $(0,0)$, we need to block the slaloms connecting $(0,0)$ to some $(j,0)$ for $j<i_{d-1}$. This gives that at least one of $(1,-1), \cdots (1,-d+1)$ is the endpoint of some slalom in $\Gamma$. Suppose $(1,-d+1)$ is the endpoint of some slalom in $\Gamma$ and let $\gamma_1$ be the leftmost such slalom. To block the slalom connecting $(0,0)$ to $(i_1,0)$, we need that at least one of $(i_1,0), \cdots (i_1,-d+2)$ is the endpoint of some slalom in $\Gamma$. Repeating this argument with the assumption that at each step the point with the lowest index is the endpoint of some slalom in $\Gamma$, we get a collection of $d$ slaloms in $\Gamma$ arranged as shown in Figure \ref{step2}.
    \begin{figure}[!h]
        \centering
        \includegraphics[scale=0.15]{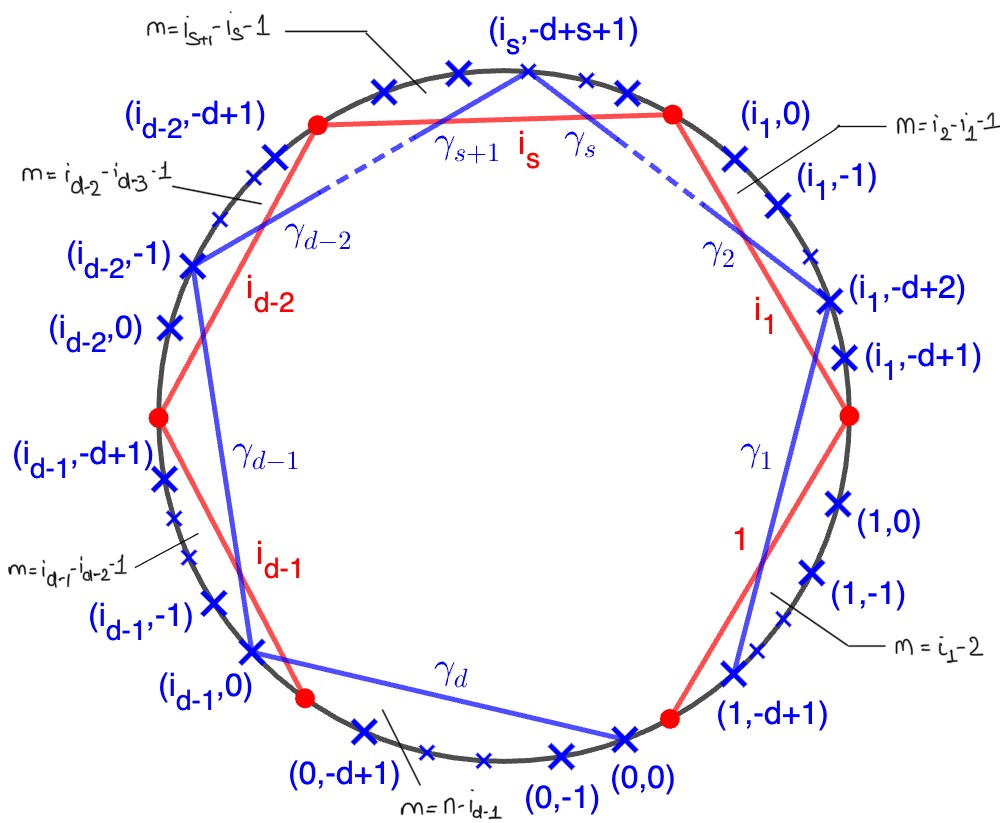}
        \caption{The bigger disc can be divided into $d$ smaller pieces}
        \label{mvalue}
    \end{figure}
    Using Lemmas \ref{lemrelabel1} and \ref{lemrelabel2}, each of the parts of the discs cut by these slaloms can be relabelled to obtain $S^d_m$ for some $m<n$. The exact value of $m$ for each of the parts is shown in Figure \ref{mvalue}. Thus the number of collections of non-mutually intersecting $n$ slaloms containing the above collection of $d$ slaloms is given by the sum of the product of the number of collections of non-mutually intersecting $a_1,\cdots, a_d$ slaloms in $S^d_{kA_{i_1-2}}, S^d_{kA_{i_2-i_1-1}}, \cdots, S^d_{kA_{i_{d-1}-i_{d-2}-1}}, S^d_{kA_{n-i_{d-1}}}$ respectively. Now, since we know that $a_1\leq i_1-2$, $a_l\leq i_l-i_{l-1}-1$ for $2\leq l\leq d-1$, $a_d\leq n-i_{d-1}$, and $\Sigma_{m=1}^da_d=n-d$, we get that $a_1=i_1-2$, $a_l= i_l-i_{l-1}-1$ for $2\leq l\leq d-1$, and $a_d= n-i_{d-1}$. This means that the above number is equal to the product $B^d_{i_1-2}B^d_{i_2-i_1-1}\cdots B^d_{i_{d-1}-i_{d-2}-1}B^d_{n-i_{d-1}}$. Taking into account all possible values of $i_1, \cdots i_{d-1}$, we get the sum $$\Sigma_{i_{d-1}=2}^n\Sigma_{i_{d-2}=2}^{i_{d-1}-1}\cdots\Sigma_{i_1=2}^{i_2-1}B^d_{i_1-2}B^d_{i_2-i_1-1}\cdots B^d_{i_{d-1}-i_{d-2}-1}B^d_{n-i_{d-1}}.$$

    Now in the penultimate step of the above argument, when one of $(i_{d-2},0)$ or $(i_{d-2},-1)$ has to be connected to something in $\Gamma$, suppose $(i_{d-2},-1)$ is not connected to anything. Then $(i_{d-2},0)$ has to be connected to something, which is only possible if $i_{d-2}=i_{d-1}$ (Figure \ref{step3}). 
    \begin{figure}
        \centering
        \begin{subfigure}{.5\textwidth}
        \centering
        \includegraphics[scale=0.25]{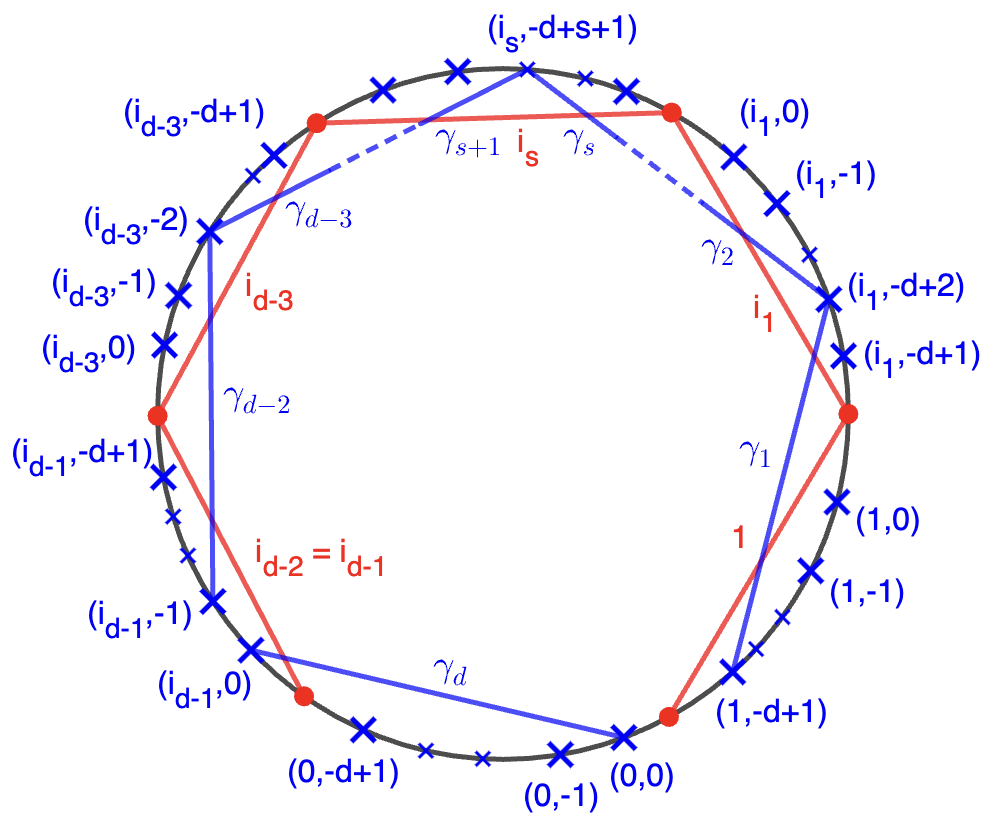}
        \caption{}
        \label{step3}
        \end{subfigure}%
        \begin{subfigure}{.5\textwidth}
        \centering
        \includegraphics[scale=0.26]{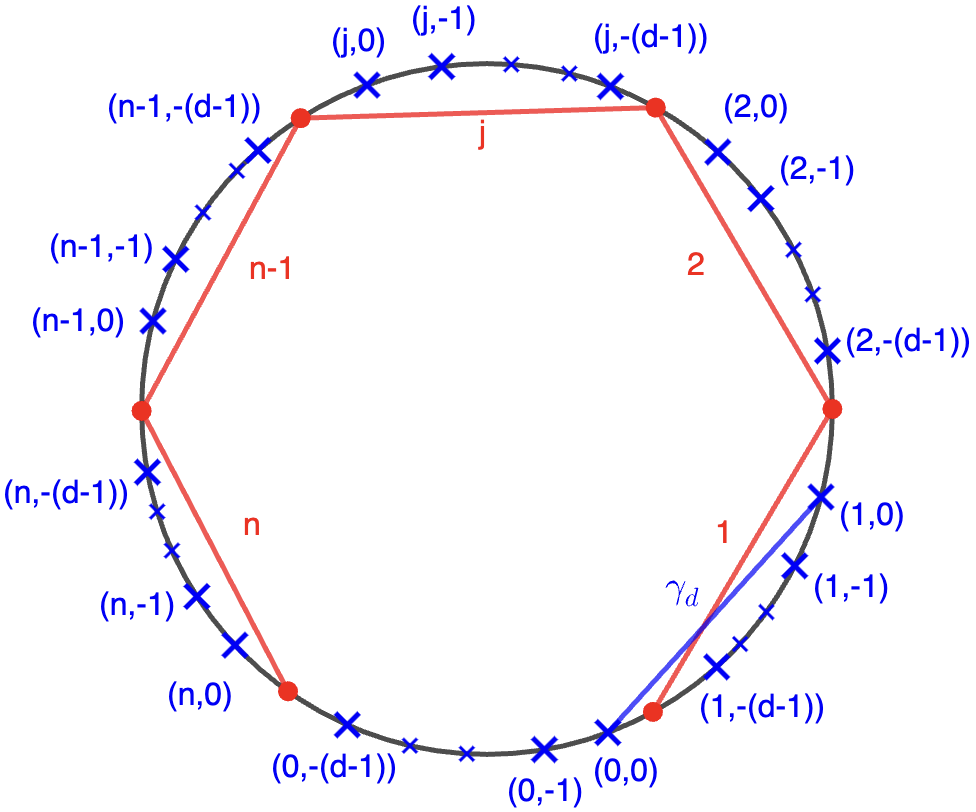}
        \caption{}
        \label{step4}
        \end{subfigure}
        \caption{}
\end{figure}
    Again, in this case, the parts of the disc cut by the $d-1$ slaloms can be relabelled by Lemmas \ref{lemrelabel1} and \ref{lemrelabel2} to get $S^d_{m}$ for some $m<n$. And we will get that the number of collections $\Gamma$ containing this set of $d-1$ slaloms is given by the product of $d-1$ terms $B^d_{i_1-2}, B^d_{i_2-i_1-1},\cdots ,B^d_{i-i_{d-3}-1},B^d_{n-i}$. Repeating this argument at each step, we get that the total number of collections $\Gamma$ in which $(0,0)$ is connected to something is given by $$\Sigma_{k=0}^{d-1}\binom{d-1}{k}\Sigma_{i_k=2}^n\Sigma_{i_{k-1}=2}^{i_k-1}\cdots\Sigma_{i_1=2}^{i_2-1}B^d_{i_1-2}B^d_{i_2-i_1-1}\cdots B^d_{i_k-i_{k-1}-1}B^d_{n-i_k},$$ where the sum over $0$ elements is taken to be $B^d_{n-1}$, corresponding to the boundary case in Figure \ref{step4}.

    \item $(0,0)$ is not the endpoint of a slalom: First suppose that $(0,-1)$ is connected to something. It can be easily seen that the above cutting and relabelling procedure works in this case as well, except that the maximum number of parts in which the disc is cut is now $d-1$. This gives that the number of collections $\Gamma$ in this case is $$\Sigma_{k=0}^{d-2}\prescript{d-2}{}{C}_k\Sigma_{i_k=2}^n\Sigma_{i_{k-1}=2}^{i_k-1}\cdots\Sigma_{i_1=2}^{i_2-1}B^d_{i_1-2}B^d_{i_2-i_1-1}\cdots B^d_{i_k-i_{k-1}-1}B^d_{n-i_k}.$$ In the general case, let $m$ be the smallest integer such that $(0,-m)$ is connected to something. Then the number of collections is given by $$\Sigma_{k=0}^{d-1-m}\binom{d-1-m}{k}\Sigma_{i_k=2}^n\Sigma_{i_{k-1}=2}^{i_k-1}\cdots\Sigma_{i_1=2}^{i_2-1}B^d_{i_1-2}B^d_{i_2-i_1-1}\cdots B^d_{i_k-i_{k-1}-1}B^d_{n-i_k}.$$ Thus, we get that the total number of $d$-term silting objects in $kA_n$ is given by the recursive formula $$B^d_n=\Sigma_{k=0}^{d-1}\Sigma_{m=k}^{d-1}\binom{m}{k}\Sigma_{i_k=2}^n\Sigma_{i_{k-1}=2}^{i_k-1}\cdots\Sigma_{i_1=2}^{i_2-1}B^d_{i_1-2}B^d_{i_2-i_1-1}\cdots B^d_{i_k-i_{k-1}-1}B^d_{n-i_k},$$ where $B_0^d=1$, and $B_1^d=d$.
    
\end{enumerate}
Since the above recursive formula also counts the number of complete ordered $d$-ary trees with $n+1$ internal nodes, which are known to be counted by the Pfaff-Fuss-Catalan numbers \cite{HP}, we recover the result that the number of $d$-term silting objects in $kA_n$ is given by the Pfaff-Fuss-Catalan number $C^d_{n+1}$.

\printbibliography
\Address
\end{document}